\newtheorem{theorem}{Theorem}
\newtheorem{lemma}{Lemma}[section]
\newtheorem{corollary}{Corollary}
\newtheorem{proposition}{Proposition}[section]
\newtheorem{conjecture}{Conjecture}
\theoremstyle{definition}
\newtheorem{definition}{Definition}
\theoremstyle{remark}
\newtheorem{remark}{Remark}
\theoremstyle{example}
\newcommand{\RNum}[1]{\uppercase\expandafter{\romannumeral #1\relax}}
\begin{document}

\title{On the exact orders of critical value in Finitary Random Interlacements}

\author{Zhenhao Cai}
\address[Zhenhao Cai]{Peking University}
\email{caizhenhao@pku.edu.cn}

\author{Yuan Zhang}
\address[Yuan Zhang]{Peking University}
\email{zhangyuan@math.pku.edu.cn}
\urladdr{https://www.math.pku.edu.cn/teachers/zhangyuan/eindex.html}
	\maketitle

\tableofcontents

\begin{abstract}
	In this paper, we prove the exact orders of critical intensity $u_*(T)$ in Finitary Random Interlacements (FRI) in $\mathbb{Z}^d, \ d\ge 3$ with respect to the expected fiber length $T$. We show that as $T\to\infty$, 
	$$
	u_*(T)\sim \left\{
	\begin{aligned}
	&T^{-1}, \hspace{0.36 in} d\ge 5\\
	&T^{-1}\log T, \ d=4\\
	&T^{-1/2}, \hspace{0.25 in} d=3
	\end{aligned}
	\right..
	$$
	Our estimates also give the order of magnitude at which the percolative phase transition with respect to $T$ takes place. 
\end{abstract}

\section{Introduction}

Finitary Random Interlacements (FRI) Model was introduced by Bowen in \cite{bowen2019finitary} as a killed version of the well-known Random Interlacements (RI) proposed by Sznitman \cite{sznitman2010vacant}, composed of geometrically killed simple random walks. Finitary Random Interlacements, denoted by $\mathcal{FI}^{u,T}$, on a certain graph $G$ can be characterized by its model parameters. The multiplicative parameter $u$ governs the intensity of trajectories and is parallel to the intensity parameter in RI. And for the geometrically killing parameter $T$, it gives the average length of each killed simple random walk trajectory, while at the same time $T+1$ is inversely proportional to the intensity of trajectories. The precise definition(s) of FRI can be found in \cite{bowen2019finitary, procaccia2021percolation} as well as in Section \ref{section_notation} of this paper.

Thus, for any finite subset $A$ of $G$, the aforementioned tradeoff mechanism implies that for fixed $u$ as $T$ increases, all trajectories traversing $A$ tends to start from, and end at some ``galaxies far far away", making the ``local behavior" of FRI resemble that of the original RI model. This was made rigorous in \cite{bowen2019finitary}, and we refer to \cite{cai2021non} for more heuristic discussions. At a macroscopic scale, Bowen proved in \cite{bowen2019finitary} the existence of infinite clusters in FRI (as an edge set) for sufficiently large $T$ when $G$ is non-amenable. For three or higher dimensional lattice, \cite{procaccia2021percolation} showed that FRI has a non-trivial percolative phase transition with respect to $T$. However, as a result of the tradeoff mechanism, FRI has been shown in \cite{cai2021non}  to be globally non-monotonic for $T\in (0,\infty)$, making the global existence and uniqueness of a critical  killing parameter $T_c$ remain open.

On the other hand, note that FRI is always monotone with respect to the intensity factor $u$, a non-trivial critical percolation intensity $u_*(T)$ was proved in \cite{prevost2020percolation} to exist for all sufficiently small $T$ and in \cite{10.1214/21-ECP424} for all $T\in (0,\infty)$. And for the asymptotic of $u_*(T)$, we proved in \cite{10.1214/21-ECP424} that for $T\ll 1$,  FRI is more Bernoulli like and  $u_*(T)\cdot T\to -\log (1-p_d^c)/2$, where $p_d^c$ is the critical probability for Bernoulli bond percolation in $\mathbb{Z}^d$. While for the opposite end as $T\to \infty$, the phase transition had been shown to occur between some polynomial orders of $T$, and multiple non-sharp bounds for such orders were given in \cite{cai2021non,10.1214/21-ECP424}. It was further conjectured in \cite{cai2021non} that $u_*(T)\sim T^{-1}$ for all $d\ge 3$.

In this paper, we prove the exact order of the asymptotic of $u_*(T)$ as $T\to\infty$. For $d\ge 5$, we validate that $u_*(T)$ is indeed inversely proportional with respect to $T$. However, for $d=4$, a logarithm correction term is needed. And for $d=3$, $u_*(T)\sim T^{-1/2}$.

\begin{remark}
	We now see the asymptotic orders found in this paper more ``natural" than what was previously guessed, as they are actually in consistence with (and also determined by) the order of simple random walk intersection probabilities (Theorem 3.3.2, \cite{lawler2013intersections}), and the capacity estimations on the simple random walk trajectories (Theorem 2 in \cite{jain1968range}; Corollary 1.4, Proposition 1.5 in \cite{asselah2018capacity}). One may see the roles played by both of them in the subsequent proof. However, the fact that all random walks in this paper are killed geometrically induces substantial difficulties in estimating the killed-capacities. See Proposition \ref{lemma_capT}. 
\end{remark}

A different but highly relevant model was considered by Erhard and Poisat in \cite{MR3519252}, where they studied a Poisson cloud of Wiener sausages of deterministic length in $\mathbb{R}^d, \ d\ge 4$. When they considered the percolative phase transition with respect to the length and radius of the sausages, highly parallel asymptotic behaviors were observed. The model of Wiener sausages and finitary random interlacements are intensively correlated, and are both related to the recently introduced Worm Model, as discussed in \cite{worm2021}. Moreover, although the proofs in this paper were derived independently, we remark that some general ideas in Section \ref{section4.3} and \ref{section_upperbound} of this paper, share the spirits of Section 3-4, \cite{MR3519252}. However, it is worth noting that main results in these two papers do not follow from one another. And they are subsequently different in terms of the technicalities tackled, such as the estimations on the killed-capacities, and the criteria for ``good boxes" in the coarse-graining argument in Section \ref{section_upperbound} of this paper.

\section{Notations and preliminaries}\label{section_notation}

In this section, we introduce notations that will be later used in this paper. At the same time, we also cite certain preliminary results about FRI from previous studies, especially \cite{bowen2019finitary,cai2021continuity,10.1214/21-ECP424,procaccia2021percolation}. The notations adopted in this paper are generally in consistence with our previous works, say \cite{10.1214/21-ECP424,procaccia2021percolation} in this series of study. 

\textbf{Graph $\left(\mathbb{Z}^d,\mathbb{L}^d\right)$ and common metrics:} Let $\mathbb{Z}^d$ be the $d$-dimensional lattice. We denote the $l^{\infty}$ and Euclidean distances on $\mathbb{Z}^d$ by $|\cdot|$ and $|\cdot|_2$ respectively. We also denote by $\mathbb{L}^d$ the set of undirected edges on $\mathbb{Z}^d$ (i.e., $\mathbb{L}^d:=\left\lbrace e=\{x,y\}:x,y\in \mathbb{Z}^d\ \text{such that}\ |x-y|_2=1 \right\rbrace $). For any non-empty subsets $A,B\subset \mathbb{Z}^d$, we define the distance between $A$ and $B$ by $d(A,B):=\min_{x\in A,y\in B}|x-y|$.

\textbf{Lexicographical orders for vertices and sets of vertices:} For $x_1,x_2\in \mathbb{Z}^d$, say $x_1$ is lexicographically-smaller than $x_2$ (denoted by $x_1\vartriangleleft x_2$) if there exists some $1\le i\le d$ such that $x_1^{(i)}<x_2^{(i)}$ (where $x^{(i)}$ is the $i$-th coordinate of $x$) and for each $1\le j<i$, $x_1^{(j)}=x_2^{(j)}$. 

Moreover, for any $A=\{x_1,...,x_m\},B=\{y_1,...,y_n\}\subset \mathbb{Z}^d$ (assume that $x_1\vartriangleleft...\vartriangleleft x_m$ and $y_1\vartriangleleft...\vartriangleleft y_n$), if $A\subset B$, or $A\not\subset B$ while $x_{i_0}\vartriangleleft y_{i_0}$, where $i_0=\min\{i:x_{i_0}\neq y_{i_0}\}$, then we also say that $A$ is lexicographically-smaller than $B$ and write $A\vartriangleleft B$.

\textbf{Boundaries of a set of vertices:} For any non-empty subset $A\subset \mathbb{Z}^d$, we define the inner boundary of $A$ by $\partial A:=\left\lbrace x\in A:\exists\ z\in \mathbb{Z}^d\setminus A\ \text{s.t.}\ |x-z|_2=1 \right\rbrace $ and define the outer boundary of $A$ by $\partial^{out}A:=\left\lbrace x\in \mathbb{Z}^d\setminus A:\exists\ z\in A\ \text{s.t.}\ |x-z|_2=1 \right\rbrace $.  

\textbf{Boxes in $\mathbb{Z}^d$:} For any $x\in \mathbb{Z}^d$ and integer $n\ge 1$, we denote the box with size $n$ and corner $x$ by $B_x(n):=\left( x+\left[ 0,n \right)^d\right)\cap \mathbb{Z}^d$; we also denote that $\widetilde{B}_x(n):=\left( x+\left[ -n,2n \right)^d\right)\cap \mathbb{Z}^d$. Meanwhile, for any $T\ge 1$, let $n_T:=\lfloor \sqrt{T}\rfloor $, $B_x^T:=B_x(n_T)$ and $\widetilde{B}_x^T:=\widetilde{B}_x(n_T)$.

\textbf{Random walks in $\mathbb{Z}^d$ ($d\ge 3$):} For any $x\in \mathbb{Z}^d$, we denote the law of simple random walks starting from $x$ by $P_x\left(\cdot\right)$. Moreover, we also consider the geometrically killed random walks: for any $x\in \mathbb{Z}^d$ and $T>0$, let $X_\cdot\sim P_x\left(\cdot\right)$ and $N_T\sim Geo\left(\frac{1}{T+1}\right)$ (i.e., for any integer $k\ge 0$, $P\left(N_T=k\right)=\frac{1}{T+1} \left(\frac{T}{T+1} \right)^k$), which are independent to each other; then $\left\lbrace X_i\right\rbrace_{0\le i\le N_T}$ is called as a geometrically killed random walk starting from $x$ with average length $T$. We denote its law by $P_x^{(T)}$.

\textbf{Hitting time and entrance time:} For any random walk $\left\lbrace X_i\right\rbrace_{0\le i<m}$ ($m$ can be either finite or infinite) and subset $A\subset \mathbb{Z}^d$, we denote the first time $X_\cdot$ hitting $A$ by $H_A\left( X_\cdot\right):=\min\left\lbrace 0\le i<m:X_i\in A \right\rbrace$ (we always set that $\min \emptyset=\infty$). Meanwhile, we also denote the entrance time by $\bar{H}_A\left( X_\cdot\right):=\min\left\lbrace 1\le i<m:X_i\in A \right\rbrace$.

\textbf{Escaping probability and capacity:} For any subset $A\subset \mathbb{Z}^d$ and $x\in A$, define the escaping probability on $A$ starting from $x$ w.r.t. simple random walks as $Es_A(x):=P_x\left(\bar{H}_A=\infty\right)$. Note that for any internal vertex $y\in A\setminus \partial A$, $Es_A(x)=0$. Similarly, one may also define the escaping probability on $A$ starting from $x$ w.r.t. geometrically killed random walks with average length $T$ by $Es_A^{(T)}:=P_x^{(T)}\left(\bar{H}_A=\infty \right)$. It is worth pointing out that for each $y\in A\setminus \partial A$, $Es_A^{(T)}(y)=P_y^{(T)}\left(N_T=0 \right)=\frac{1}{T+1}$, and that for all $x\in A$, $Es_A^{(T)}(x)\ge Es_A(x)$ always holds.

Consider the capacity w.r.t. simple random walks for any finite subset $A\subset \mathbb{Z}^d$: $cap(A):=\sum_{x\in A}Es_A(x)$. By Proposition 6.5.2 in \cite{lawler2010random}, there exist $c_1(d),c_2(d)>0$ such that for all $n\ge 1$, 
\begin{equation}\label{cap_box}
	c_1 n^{d-2}\le cap\left(B_0(n)\right)\le  c_2 n^{d-2}. 
\end{equation}
According to \cite{bowen2019finitary}, we also consider the $T$-capacity: $cap^{(T)}(A):=\sum_{x\in A}Es_A^{(T)}(x)$. Since $Es_A^{(T)}(x)\ge Es_A(x)$ for all $x\in A$, one has $cap^{(T)}(A)\ge cap(A)$.

\textbf{Edge sets and paths:} We denote by $W^{\left[0,\infty \right) }$ the set of all nearest-neighbor paths with finite lengths on $\mathbb{Z}^d$. Precisely, each element of $W^{\left[0,\infty \right)}$ is an array of vertices $(x_0,...,x_n)$ such that for all $0\le i\le n-1$, $|x_i-x_{i+1}|_2=1$. For each $\eta=(x_0,...,x_n)\in W^{\left[0,\infty \right) }$, we say the length of $\eta$ is $n$. Note that each path $\eta=(x_0,x_1,...,x_n)$ with length $n \ge 1$ can be regarded as a unique edge set $\left\lbrace \{x_i,x_{i+1}\}\right\rbrace_{0\le i\le n-1}$. Hence, we no longer distinguish the notations between a path and its corresponding edge set in the rest of this paper. Similarly, for each $\mathcal{A}=\sum_{i\in \mathcal{I}}\delta_{\eta_i}$ (a point measure on $W^{\left[0,\infty \right) }$), we also, without causing further confusion, equate $\mathcal{A}$ and the edge set $\cup_{i\in \mathcal{I}}\eta_i$. 


For any edge set $\mathcal{E} \subset \mathbb{L}^d$, we write the set of all vertices covered by $\mathcal{E}$ as $V\left(\mathcal{E}\right)$ (i.e., $V\left(\mathcal{E}\right)=\cup_{\{x,y\}\in \mathcal{E}}\{x,y\}$). 


\textbf{Definitions of finitary random interlacements (FRI):} We denote the Lebesgue measure on $ \left[0,\infty \right)$ by $\lambda^+$. Let $v^{(T)}:=\sum_{x\in \mathbb{Z}^d}\frac{2d}{T+1}P_x^{(T)}\left(\cdot\right)$. Note that $ \lambda^+\times v^{(T)}$ is a $\sigma$-finite measure on $ \left[0,\infty \right)\times W^{\left[0,\infty \right) } $. Then according to \cite{bowen2019finitary}, the FRI is defined as follows:
\begin{definition}\label{def_FRI1}
	For any $T>0$, let $\mathcal{FI}^{T}=\sum_{i\in \mathbb{N}}\delta_{(u_i,\eta_i)}$ be the Poisson point process on $ \left[0,\infty \right)\times W^{\left[0,\infty \right) } $ with intensity measure $ \lambda^+\times v^{(T)}$. For any $u>0$, finitary random interlacements with expected fiber length $T$ and level $u$ are defined as
	\begin{equation}
		\mathcal{FI}^{u,T}:=\sum_{i\in \mathbb{N}:u_i\le u}\delta_{\eta_i}. 
	\end{equation}
\end{definition}
An alternative definition of FRI introduced in \cite{procaccia2021percolation} is sometimes useful as well:

\begin{definition}\label{def_FRI2}
	For any $u,T>0$, define an independent sequence of Poisson random variables $\left\lbrace N_x \right\rbrace_{x\in \mathbb{Z}^d}\overset{i.i.d.}{\sim}Pois\left(\frac{2du}{T+1}\right) $. For each $x\in \mathbb{Z}^d$, sample $N_x$ geometrically killed random walks with law $P_x^{(T)}$ independently. Then the finitary random interlacements with expected fiber length $T$ and level $u$ is the point measure consisting of all trajectories sampled above. 
\end{definition}

According to Definition \ref{def_FRI2}, for any finite subset $K\subset \mathbb{Z}^d$, the number of paths in $\mathcal{FI}^{u,T}$ starting from $K$ has the distribution of $Pois\left(\frac{2du}{T+1}\cdot |K| \right)$, where $|K|$ is the cardinality of $K$.

\textbf{FRI traversing a finite set:} The part of FRI $\mathcal{FI}^{u,T}$ traversing a finite subset $K\subset \mathbb{Z}^d$ can be discribed as follows. We first introduce a truncation mapping $\pi_K$: for each path $\eta=(x_0,...,x_n)$, if $V\left( \eta\right)\cap K=\emptyset$, let $\pi_K\left(\eta\right)=\emptyset$; otherwise, let $\pi_K\left(\eta\right)=(x_m,...,x_n)$, where $m=\min\left\lbrace 0\le i\le n:x_i\in K  \right\rbrace $. Then we denote the point measure composed of the parts of trajectories in $\mathcal{FI}^{u,T}$ after hitting $K$ by
\begin{equation}
		\mathcal{FI}^{u,T}_K:=\sum_{\eta \in \mathcal{FI}^{u,T}:V\left( \eta\right) \cap K\neq \emptyset}\delta_{\pi_K\left(\eta \right) }. 
\end{equation}
According to Lemma 2.8 in \cite{procaccia2021percolation}, for each $x\in K$, the number of paths in $\mathcal{FI}^{u,T}_K$ starting from $x$ is $Pois\left(2du\cdot Es^{(T)}_K(x)\right)$; meanwhile, all these paths have the law $P_x^{(T)}$ and are independent to each other. As a corollary, the number of paths intersecting $K$ is $Pois\left(2du\cdot cap^{(T)}(K) \right)$.

\textbf{Connection between sets of vertices:} For any sets of vertices $A,B\subset \mathbb{Z}^d$ and edge set $\mathcal{E}\subset \mathbb{L}^d$, we say that $A$ and $B$ are connected by $\mathcal{E}$ (denoted by $A\xleftrightarrow[]{\mathcal{E}}B$) if $A\cap B\neq \emptyset$, or $A\cap B=\emptyset$ while there exists a finite path $(x_0,x_1,...,x_n)$ such that $x_0\in A$, $x_n\in B$ and for each $0\le j\le n-1$, $\{x_j,x_{j+1}\}\in \mathcal{E}$.

\textbf{Phase transition and critical value of FRI:} For any $u,T>0$, we say $\mathcal{FI}^{u,T}$ percolates if with probability $1$, there exist some paths in $\mathcal{FI}^{u,T}$ which compose an infinite cluster (the word ``cluster'' means a connected subset of $\mathbb{L}^d$). Note that $\mathcal{FI}^{u,T}$ percolates if and only if $P\left(0 \xleftrightarrow[]{\mathcal{FI}^{u,T}} \infty\right)>0 $, where $\left\lbrace 0 \xleftrightarrow[]{\mathcal{FI}^{u,T}} \infty \right\rbrace$ is the event that $\mathcal{FI}^{u,T}$ contains an infinite cluster including the origin of $\mathbb{Z}^d$.

According to \cite{cai2021continuity,10.1214/21-ECP424}, the critical value of FRI is defined as: for any $T>0$, 
\begin{equation}
	u_*(T):=\sup \left\lbrace u>0:P\left( 0 \xleftrightarrow[]{\mathcal{FI}^{u,T}} \infty\right) =0 \right\rbrace.  
\end{equation}
In fact, Theorem 3.7 in \cite{10.1214/21-ECP424} has proved that $u_*(T)\in (0,\infty)$ for all $T>0$.

\section{Main results}

The main result proved in this paper is the accurate orders of the critical value $u_*(T)$:
\begin{theorem}\label{theorem1}
For $d\ge 3$, there exist $C_1(d)$ and $C_2(d)>0$ such that for all sufficiently large $T$, 
	\begin{itemize}
		\item when $d=3$, 
		\begin{equation}
			C_1T^{-\frac{1}{2}}  \le 	u_*(T)\le C_2T^{-\frac{1}{2}};
		\end{equation}
		
		\item  when $d=4$,  
		\begin{equation}
			C_1T^{-1}\cdot \log(T)  \le 	u_*(T)\le C_2 T^{-1}\cdot\log(T);
		\end{equation}
		
		\item  when $d\ge 5$, 
		\begin{equation}
		\label{d5}
			C_1 T^{-1}  \le 	u_*(T)\le C_2 T^{-1}.
		\end{equation}
	\end{itemize}
\end{theorem}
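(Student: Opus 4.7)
The plan is to prove the matching upper and lower bounds at the natural spatial scale $n_T=\lfloor\sqrt{T}\rfloor$, the diffusive displacement of a single geometrically killed trajectory of mean length $T$. The dimension-dependent prefactor in $u_*(T)$ is ultimately dictated, through the Poisson structure of $\mathcal{FI}^{u,T}$, by the capacity of the range of one trajectory. The classical estimates referenced in the remark (Jain 1968; Asselah--Schapira) give $\mathrm{cap}(R)\asymp T$ for $d\ge 5$, $\asymp T/\log T$ for $d=4$, and $\asymp\sqrt{T}$ for $d=3$, and these are exactly the scales that, once inverted, produce the three orders in Theorem \ref{theorem1}. I will therefore write $f_d(T)$ for the conjectured critical order in each dimension and treat the three cases uniformly.

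For the lower bound I would use a chain-of-trajectories argument. On the event $\{0\xleftrightarrow[]{\mathcal{FI}^{u,T}}\infty\}$, one can extract a sequence of distinct trajectories $\eta_0,\eta_1,\dots\in\mathcal{FI}^{u,T}$ with $0\in V(\eta_0)$ and $V(\eta_i)\cap V(\eta_{i+1})\neq\emptyset$. By the Poisson description of the trajectories hitting a fixed set (Lemma 2.8 of \cite{procaccia2021percolation}), conditionally on $\eta_i$ the set of further trajectories that hit $V(\eta_i)$ is Poisson with mean $2du\cdot\mathrm{cap}^{(T)}(V(\eta_i))$, and each is itself a killed walk started on $V(\eta_i)$. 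Dominating this exploration by a Galton--Watson tree whose offspring mean is of order $u\cdot\mathbb{E}[\mathrm{cap}^{(T)}(R)]$ and using the capacity estimates above, subcriticality holds provided $u\le c\,f_d(T)$ in each dimension, which yields $u_*(T)\ge C_1 f_d(T)$.

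For the upper bound I would run a static renormalization on the coarse lattice of translates of $B_x^T$. Call a block \emph{good} if $\mathcal{FI}^{u,T}$ contains a cluster inside $\widetilde{B}_x^T$ connecting $\partial B_x^T$ to $\partial\widetilde{B}_x^T$. When $u$ exceeds a sufficiently large multiple of $f_d(T)$, the expected number of trajectories traversing $B_x^T$, namely $2du\cdot\mathrm{cap}^{(T)}(B_x^T)$, becomes so large that, by a second-moment / sprinkling argument controlling the probability that these trajectories actually merge into a macroscopic cluster, the good-block probability can be pushed arbitrarily close to $1$. The field of good blocks then stochastically dominates a supercritical Bernoulli site percolation on the coarse lattice, and a standard Peierls/contour argument converts this into the existence of an infinite FRI cluster through the origin, giving $u_*(T)\le C_2 f_d(T)$.

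The principal obstacle is the sharp control of the killed capacity $\mathrm{cap}^{(T)}$ in all three regimes, which is exactly the content of Proposition \ref{lemma_capT}. Unlike the classical capacity, $\mathrm{cap}^{(T)}$ receives an additive contribution $\tfrac{1}{T+1}$ from every interior vertex, because geometric killing artificially inflates the escape probability; one must carefully separate this ``geometric'' escape from the genuine random-walk escape in order to recover the correct sharp orders, and it is precisely this interplay that produces the logarithmic correction in $d=4$ and distinguishes the three regimes. I expect this capacity analysis, rather than the chain-counting skeleton or the renormalization, to be the real work of the paper; once the correct two-sided bounds on $\mathrm{cap}^{(T)}(B_0^T)$ and $\mathbb{E}[\mathrm{cap}^{(T)}(R)]$ are in hand, both the lower-bound tree argument and the upper-bound block argument plug in directly.
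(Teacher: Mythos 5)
Your identification of the governing scales and your Galton--Watson lower bound are exactly the paper's argument \emph{for $d\ge 4$}: the layers $\Pi_k$ of trajectories hitting the previous generation form a branching structure whose offspring mean is controlled by $u\cdot E[\mathrm{cap}^{(T)}(R_{N_T})]$, and the whole weight of the proof sits in the upper bound $E[\mathrm{cap}^{(T)}(R_{N_T})]\le C_6F_d(T)$ of Proposition \ref{lemma_capT}, as you anticipated. The genuine gap is that you run this argument uniformly in $d\ge 3$, while Proposition \ref{lemma_capT} is stated and proved only for $d\ge 4$, and the $d=3$ case is precisely where it breaks. The point is not the additive $|R|/(T+1)$ term (that only contributes a constant, since $|R|\asymp T$), but the finite-horizon escape probability: $Es^{(T)}_{R}(X_i)=P(\bar H_R>N_T)$ is governed by the probability that a fresh walk avoids the range for only $\sim T$ steps, and in $d=3$ two independent walks intersect almost surely, so this finite-time non-intersection probability decays like a nontrivial power $T^{-\xi}$ rather than tracking the (zero) infinite-time escape probability. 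Summed over $\sim T$ points of the range this can produce a killed capacity of order much larger than $\sqrt T$, so the offspring mean at $u\asymp T^{-1/2}$ is not obviously $<1$ and the subcriticality claim does not follow. The paper abandons the capacity route entirely in $d=3$ and instead proves the lower bound by a multi-scale renormalization of crossing events at scale $L_0=C_0 n_T$, using only the stretched-exponential tail for the diameter of a killed trajectory (Lemma \ref{lemma_diameter}) to decouple well-separated boxes. You need either that argument or a genuinely new bound on $E[\mathrm{cap}^{(T)}(R_{N_T})]$ in $d=3$; neither is supplied by your sketch.

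On the upper bound your static renormalization (good block $=$ existence of a crossing cluster of $\widetilde B_x^T$) also leaves the key difficulty unaddressed: crossing clusters in adjacent good blocks need not intersect, and ``sprinkling'' to glue them is exactly the step you would have to make precise. The paper avoids this by a dynamic exploration in a slab in which a block is declared good only if the newly sampled trajectories \emph{both} hit the cluster $\mathcal{C}_{k-1}$ handed over by the previous block \emph{and} themselves have capacity $\ge c_1 n_T^{d-2}$ inside the enlarged block; connectivity is then automatic, and the capacity criterion is what allows the induction to close (Lemma \ref{lemma_Pcap}, proved by a Paley--Zygmund second-moment argument on the capacity of the trajectory soup, not merely from bounds on $\mathrm{cap}^{(T)}(B_0^T)$ and the single-trajectory capacity). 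So the upper bound needs more inputs than the two quantities you list, though the overall coarse-graining-to-supercritical-site-percolation skeleton is the same.
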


Note that \eqref{d5} was conjectured in \cite{cai2021non} for all $d\ge 3$. From the theorem above, we now see that it is the correct order only when $d\ge 5$. 
\begin{remark}
It was shown in \cite{cai2021non, 10.1214/21-ECP424}, that for all $d\ge 3$, $u_*(T)\sim T^{-1}$ as $T\to 0$, which combined with Theorem \ref{theorem1} implies that $u_*$ is globally inversely-proportional with respect to $T$ for all $d\ge 5$.
\end{remark}

We conjecture that the $C_1, C_2$ in Theorem \ref{theorem1} can actually be arbitrarily close to each other. I.e., 
\begin{conjecture}
There exist $C(d)\in(0,\infty), \ d\ge 3$ such that 
	\begin{itemize}
		\item when $d=3$, 
		\begin{equation}
			\lim_{T\to\infty}T^{\frac{1}{2}}\cdot u_*(T)= C(3);
		\end{equation}
		
		\item  when $d=4$,  
		\begin{equation}
			\lim_{T\to\infty}T \log^{-1}(T)\cdot  u_*(T)= C(4);
		\end{equation}
		
		\item  when $d\ge 5$, 
		\begin{equation}
		\label{d5}
			\lim_{T\to\infty} T\cdot 	u_*(T)= C(d).
		\end{equation}
	\end{itemize}
\end{conjecture}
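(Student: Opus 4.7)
The plan is to establish the limits $C(d)$ by identifying a scaling limit of the FRI model in each dimension and transferring a sharp phase transition from the limit object back to the discrete setting. In each case the heuristic is that a geometrically killed walk with mean length $T$, after the diffusive rescaling $x\mapsto x/\sqrt{T},\ t\mapsto t/T$, converges to a Brownian motion with $\mathrm{Exp}(1)$ lifetime, and the right normalization of $u$ in Theorem~\ref{theorem1} is precisely the one that keeps the expected number of trajectories meeting a macroscopic box of side $\sqrt{T}$ of order one.

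For $d\ge 5$ I would introduce a continuum model $\widetilde{\mathcal{FI}}^v$ on $\mathbb{R}^d$, defined as a Poisson process of Brownian trajectories with exponential lifetimes started according to $v\cdot dx$, with connectivity declared through the unit Wiener sausages of the trajectories (following the framework of Erhard--Poisat \cite{MR3519252}). The first step is to prove that $\widetilde{\mathcal{FI}}^v$ has a non-trivial critical value $v_*(d)\in(0,\infty)$, and moreover that the phase transition is \emph{sharp} in the sense that the percolation probability jumps from near $0$ to near $1$ across $v_*(d)$ at the level of a large macroscopic box; this sharpness is the crucial input that $u_*(T)$ itself does not supply, since FRI is globally non-monotone in $T$ as noted in \cite{cai2021non}. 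The second step is to couple $\mathcal{FI}^{v/T,T}$, restricted to a box of side $R\sqrt{T}$, with $\widetilde{\mathcal{FI}}^{v/(2d)}$ restricted to a box of side $R$, showing that the two local percolation events have equal probability up to an error tending to $0$ as $T\to\infty$ for fixed $R$. Combining these two steps with the coarse-graining already used in Section~\ref{section_upperbound} yields $\lim_{T\to\infty} T\cdot u_*(T)=v_*(d)/(2d)=:C(d)$.

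For $d=4$ the argument must accommodate the logarithmic correction. Here the capacity of a killed walk of mean length $T$ is of order $T/\log T$ by Theorem~2 of \cite{jain1968range}, so the natural rescaling is $u=v\log T/T$, and the limit object is not a simple Brownian soup but rather a Brownian interlacement--type model in which the ``critical dimension'' intersection structure is regularized by the logarithm. I would instead show a two-scale comparison: on a box of side $\sqrt{T}$, the number of trajectories touching a given mesoscopic ball behaves, in the limit, like a Poisson variable whose parameter is $v$ times a deterministic capacity functional of the ball, and the sharp threshold comes from the corresponding continuum percolation question. For $d=3$ the regime $u_*(T)\sim T^{-1/2}$ reflects that a single trajectory has capacity of order $\sqrt{T}$, so the density of starting points vanishes in the limit; after rescaling space by $\sqrt{T}$ and setting $u=v/\sqrt{T}$, the resulting continuum object is a Poissonian cloud of Brownian trajectories with \emph{zero} volume density of starting points but unit surface density through any fixed hyperplane, and the existence of $C(3)$ reduces to sharpness of its percolation transition.

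The main obstacle in every dimension is the sharpness of the phase transition for the continuum limit, together with the uniform-in-$T$ coupling needed to transfer it back. The non-monotonicity of FRI in $T$ rules out any direct subadditive or Fekete-type argument on $T\cdot u_*(T)$, so the only route I see is to first solve the continuum problem (using OSSS/Duminil-Copin--Tassion type differential inequalities adapted to a Boolean-like model of Wiener sausages, along the lines of \cite{MR3519252}), and then promote the continuum sharpness to a quantitative finite-$T$ statement via a renormalization-group argument at scale $\sqrt{T}$. Establishing uniqueness of the critical parameter in the continuum $d=4$ model, where intersection geometry is most delicate, is likely to be the hardest single step.
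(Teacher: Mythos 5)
This statement is stated in the paper as a \emph{conjecture}: the paper proves only the two-sided bounds of Theorem \ref{theorem1} (matching orders with unidentified constants $C_1<C_2$) and explicitly leaves the existence of the limits $C(d)$ open. Your text is therefore being measured against an open problem, and as written it is a research program rather than a proof; it contains genuine gaps at each of its main steps.

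The central missing ingredient is the one you yourself flag: sharpness of the phase transition for the proposed continuum limit. Neither the existence of a well-defined limit object in all three regimes, nor sharpness for it, is available. For $d\ge 5$ the Erhard--Poisat framework gives non-triviality of a critical value for Poisson clouds of Wiener sausages, but not the kind of sharpness (a finite-size criterion with quantitative subcritical decay) that your transfer argument needs; OSSS/Duminil-Copin--Tassion techniques for Poisson--Boolean models do not apply off the shelf to sausages around unbounded, heavily correlated trajectories. For $d=4$ the logarithmic correction is a lattice/intersection effect that does not survive a naive diffusive rescaling, so the ``Brownian interlacement-type model regularized by the logarithm'' is not constructed, and it is not clear it can be. For $d=3$ an intensity with ``zero volume density of starting points but unit surface density through any fixed hyperplane'' is not a well-defined $\sigma$-finite intensity measure as stated; what you presumably want is an interlacement-like process of trajectories from infinity, and building that object and proving it has a unique sharp critical parameter is itself a substantial open problem.

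Even granting a continuum limit with a sharp transition at $v_*$, the transfer back to $\lim T\cdot u_*(T)=v_*/(2d)$ requires both directions: that $v<v_*$ forces non-percolation of $\mathcal{FI}^{v/T,T}$ for all large $T$, and that $v>v_*$ forces percolation. Local weak convergence on boxes of side $R\sqrt T$ controls crossing probabilities of macroscopic boxes for fixed $R$, but absence (or presence) of an infinite cluster is a global statement; converting box-crossing estimates into it requires a renormalization inequality with errors summable over scales, uniformly in $T$, which in turn needs quantitative decay rates in the continuum subcritical phase --- i.e.\ exactly the unproven sharpness. Your observation that non-monotonicity in $T$ blocks subadditivity arguments is correct, but it does not by itself justify that the continuum route closes; as it stands, every load-bearing step (construction of the limit, its sharpness, the uniform coupling, and the two-sided transfer) remains to be supplied, so the conjecture is not proved by this proposal.
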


At the same time, we recall the definition of the critical values in terms of $T$:
\begin{definition}{(Definition 2.3, \cite{10.1214/21-ECP424})}
	For $u>0$,  $d\ge 2$, we define that $$T_c^-(u,d):=\sup\{T_0>0:\forall 0<T<T_0,\mathcal{FI}^{u,T}\ does\ not\  percolate\} $$ and $$T_c^+(u,d):=\inf\{T_0>0:\forall T>T_0,\mathcal{FI}^{u,T}\ percolates\}.$$
\end{definition}
Then as a direct corollary of Theorem \ref{theorem1}, we may also have the following exact order of $T_c^\pm$, which shows that the upper bounds found in Corollary 3.6, \cite{10.1214/21-ECP424} are actually sharp:
\begin{corollary}
	When $d=3$, \begin{equation}
		\lim_{u\to 0}\frac{\log(T_c^-)}{-\log(u)}=	\lim_{u\to 0}\frac{\log(T_c^+)}{-\log(u)}= 2;
	\end{equation}
	When $d\ge 4$, \begin{equation}
		\lim_{u\to 0}\frac{\log(T_c^-)}{-\log(u)}=	\lim_{u\to 0}\frac{\log(T_c^+)}{-\log(u)}= 1.
	\end{equation}
\end{corollary}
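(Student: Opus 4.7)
The plan is to extract the corollary directly from Theorem~\ref{theorem1} by inverting the asymptotics $u_*(T)\asymp f_d(T)$, combined with the small-$T$ behavior $u_*(T)\sim -\log(1-p_d^c)/(2T)$ recalled in the introduction, and the elementary inequality $T_c^-(u)\le T_c^+(u)$. Write $f_3(T)=T^{-1/2}$, $f_4(T)=T^{-1}\log T$, $f_d(T)=T^{-1}$ for $d\ge 5$, and set $\alpha_3=2$, $\alpha_d=1$ for $d\ge 4$. The inequality $T_c^-(u)\le T_c^+(u)$ is immediate from the definitions, since any $T$ strictly between them would be simultaneously above $T_c^+(u)$ (forcing percolation at $T$) and below $T_c^-(u)$ (forbidding it).

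For the upper bound on $T_c^+(u)$, I would apply the upper estimate in Theorem~\ref{theorem1}: there exists $T_\star$ such that $u_*(T')\le C_2 f_d(T')$ for all $T'\ge T_\star$. Since $f_d$ is eventually decreasing, $\sup_{T'\ge T}u_*(T')\le C_2 f_d(T)$ for $T$ large. Monotonicity of FRI in the intensity parameter then yields: whenever $u>C_2 f_d(T)$, the interlacement $\mathcal{FI}^{u,T'}$ percolates for every $T'\ge T$, so $T_c^+(u)\le T$. Solving $u=C_2 f_d(T)$ gives $\overline{T}_d(u)$ of order $u^{-2}$ for $d=3$, of order $u^{-1}\log(1/u)$ for $d=4$ (by inverting the implicit equation $T/\log T\asymp 1/u$), and of order $u^{-1}$ for $d\ge 5$. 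In every case $\log\overline{T}_d(u)/(-\log u)\to\alpha_d$ as $u\to 0$, which upper-bounds the limsup of both ratios in the corollary by $\alpha_d$.

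For the lower bound on $T_c^-(u)$, I would split the interval $(0,T)$ into three pieces $(0,T_0]$, $[T_0,T_\star]$, $[T_\star,T)$ and establish $u_*(T')>u$ on each. On $(0,T_0]$, the small-$T$ asymptotics yield $u_*(T')\ge c_0/T'\ge c_0/T_0$. On $[T_\star,T)$, the lower estimate in Theorem~\ref{theorem1} gives $u_*(T')\ge C_1 f_d(T')>u$ provided $T<\underline{T}_d(u)$, where $\underline{T}_d(u)$ solves $u=C_1 f_d(T)$ and has the same order as $\overline{T}_d(u)$. The middle piece $[T_0,T_\star]$ requires a uniform lower bound $\inf_{T'\in[T_0,T_\star]}u_*(T')>0$, which I would derive from the pointwise positivity of $u_*$ (Theorem~3.7 of \cite{10.1214/21-ECP424}) via a routine compactness/semicontinuity argument. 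Once $u$ is taken below the minimum of these three lower bounds, $\mathcal{FI}^{u,T'}$ fails to percolate throughout $(0,\underline{T}_d(u))$, so $T_c^-(u)\ge\underline{T}_d(u)$, yielding $\log T_c^-(u)/(-\log u)\to\alpha_d$.

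Combined with $T_c^-(u)\le T_c^+(u)$, these two bounds sandwich both ratios in the corollary at $\alpha_d$. The only real obstacle is the uniform positivity of $u_*$ over a bounded intermediate range of $T$, which is needed because Theorem~\ref{theorem1} provides information only as $T\to\infty$ and because FRI is not monotone in $T$; aside from this minor technicality, the corollary reduces to inverting the three asymptotic regimes of Theorem~\ref{theorem1}.
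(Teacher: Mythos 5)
Your proposal is correct and is essentially the argument the paper intends: the corollary is stated without proof as a direct consequence of Theorem \ref{theorem1}, and your inversion of the three asymptotic regimes of $u_*(T)$, combined with the elementary inequality $T_c^-\le T_c^+$ to sandwich both ratios, is exactly the right route. The one place where your justification is not quite sufficient as written is the middle range $[T_0,T_\star]$: pointwise positivity of $u_*$ (Theorem 3.7 of \cite{10.1214/21-ECP424}) together with compactness does \emph{not} by itself give $\inf_{T'\in[T_0,T_\star]}u_*(T')>0$ --- you need lower semicontinuity of $T\mapsto u_*(T)$ as an additional input, and that is a nontrivial result rather than a routine deduction; it is, however, available from the continuity theorem of \cite{cai2021continuity}, which the paper already cites. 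With that reference supplied explicitly, your argument is complete.
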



The rest of this paper is outlined as follows: 
\begin{itemize}
\item The lower bound for $d=3$ is proved through a renormalization argument \cite{sznitman2012decoupling}. See Section \ref{section4.1}. 
\item For $d\ge 4$, the renormalization argument seems no longer applicable. Here we need to obtain the desired lower bounds by estimating the killed capacities of killed SRW trajectories. Intuitively, we prove that you a.s. cannot take infinitely many transfers over trajectories and the range you can reach is thus finite. Detailed proofs can be found in Section \ref{section4.2}-\ref{section4.3}. 
\item The upper bound estimate for all $d\ge 3$ is done in Section \ref{section_upperbound}, where the proof is based on a coarse-graining argument on a thick slab. Intuitively, one may consider certain ``good events" on each block within a two-dimensional slab whose thickness of order $T^{1/2}$, and show that the growth of such good events dominates a supercritical Bernoulli percolation. 
\end{itemize}

\section{Proof of the lower bound}

In this section, we prove the lower bound estimates in Theorem \ref{theorem1}. It is sufficient to prove that for any $d\ge 3$, there exists a constant $C_1(d)>0$ such that for all $T\ge 2$, $\mathcal{FI}^{C_1\cdot F_d\left(T\right)^{-1},T}$ does not percolate, where for any $a>1$, 
\begin{equation}
	F_d\left(a\right):=\left\{
	\begin{aligned}
		&a^{\frac{1}{2}} &\ \ \ \ & d=3; \\
		&\frac{a}{\log(a)} &\ \ \ \ & d=4; \\
		&a & \ \ \ \ & d\ge 5.
	\end{aligned}
	\right.
\end{equation}
We prove the case when $d=3$ in Section \ref{section4.1}, and $d\ge 4$ in Section \ref{section4.2}-\ref{section4.3}.


\subsection{Proof of the lower bound for $d=3$}\label{section4.1}
The lower bound estimate in 3-dimensional lattice is proved by following an approach of decomposition of connecting events, known as the ``renormalization scheme". Such technique has been widely applied in the researches of stochastic models such as Gaussian free field (\cite{popov2015decoupling,rodriguez2013phase}), random interlacements (\cite{sidoravicius2010connectivity}) and finitary random interlacements (\cite{cai2020chemical,cai2021continuity,10.1214/21-ECP424}). We hereby specify some additional notations.

\begin{enumerate}
	\item Let $L_0=C_0n_T$ and $l_0>100$ (the exact values of constants $C_0$ and $l_0$ will be determined later). For each $n\ge 1$, let $L_n=L_0l_0^{n}$ and $\mathbb{L}_n=L_n\mathbb{Z}^d$. 
	
	\item For $n\ge 0$ and $x\in \mathbb{Z}^d$, let $B_{n,x}=B_x\left(L_n\right) $ and $\widetilde{B}_{n,x}=\widetilde{B}_x\left(L_n\right)$.
	
	\item For $n\ge 0$, let $\mathcal{I}_n=\{n\}\times \mathbb{L}_n$. Then for each $n\ge 1$ and $(n,x)\in \mathcal{I}_n$, define that 
	\begin{equation}
		\mathcal{H}_1(n,x)= \left\lbrace (n-1,y)\in \mathcal{I}_{n-1}:B_{n-1,y}\subset B_{n,x}, B_{n-1,y}\cap \partial B_{n,x}\neq \emptyset  \right\rbrace, 
	\end{equation}
	\begin{equation}
		\mathcal{H}_2(n,x)= \left\lbrace (n-1,y)\in \mathcal{I}_{n-1}:B_{n-1,y}\cap \left\lbrace z\in \mathbb{Z}^d:d\left(z,B_{n,x}\right)= \lfloor \frac{L_n}{2} \rfloor  \right\rbrace\neq \emptyset \right\rbrace.  
	\end{equation}
	\item For $n\ge 0$ and $x\in \mathbb{Z}^d$, define 
	\begin{equation}\label{lambda}
		\begin{split}
			\Lambda_{n,x}=\Bigg\{\mathcal{T}\subset \bigcup_{k=0}^{n}\mathcal{I}_k:& \mathcal{T}\cap \mathcal{I}_n=(n,x)\ \text{and for all}\ (k,y)\in \mathcal{T}\cap \mathcal{I}_k, 0<k\le n\ \text{has}\\ &\text{two descendants}\ 
			(k-1, y_i(k,y))\in \mathcal{H}_i(k,y), i=1,2\ \text{s.t.}\\
			& \mathcal{T}\cap \mathcal{I}_{k-1}=\bigcup_{(k,y)\in \mathcal{T}\cap \mathcal{I}_{k}}\left\lbrace (k-1,y_1(k,y)),(k-1,y_2(k,y)) \right\rbrace     \Bigg\}. 
		\end{split}
	\end{equation}
	By (2.8) in \cite{rodriguez2013phase}, there exists $c_0(d)>0$ such that for all $n\ge 1$,\begin{equation}\label{4.5}
		|\Lambda_{n,x}|\le \left(c_0l_0^{2(d-1)} \right)^{2^n}.  
	\end{equation}
	
	\item For $n\ge 0$ and $x\in \mathbb{Z}^d$, we define the event 
	\begin{equation}
		F_{n,x}=\bigcap\limits_{\eta\in \mathcal{FI}^{C_1\cdot F_3(T)^{-1},T}:d\left(\eta(0), \widetilde{B}_{n,x}\right)> L_n} \left\lbrace V\left( \eta\right) \cap \widetilde{B}_{n,x}=\emptyset \right\rbrace. 
	\end{equation}
	
\end{enumerate}

\begin{figure}[h]
	\centering
	\includegraphics[width=0.55\textwidth]{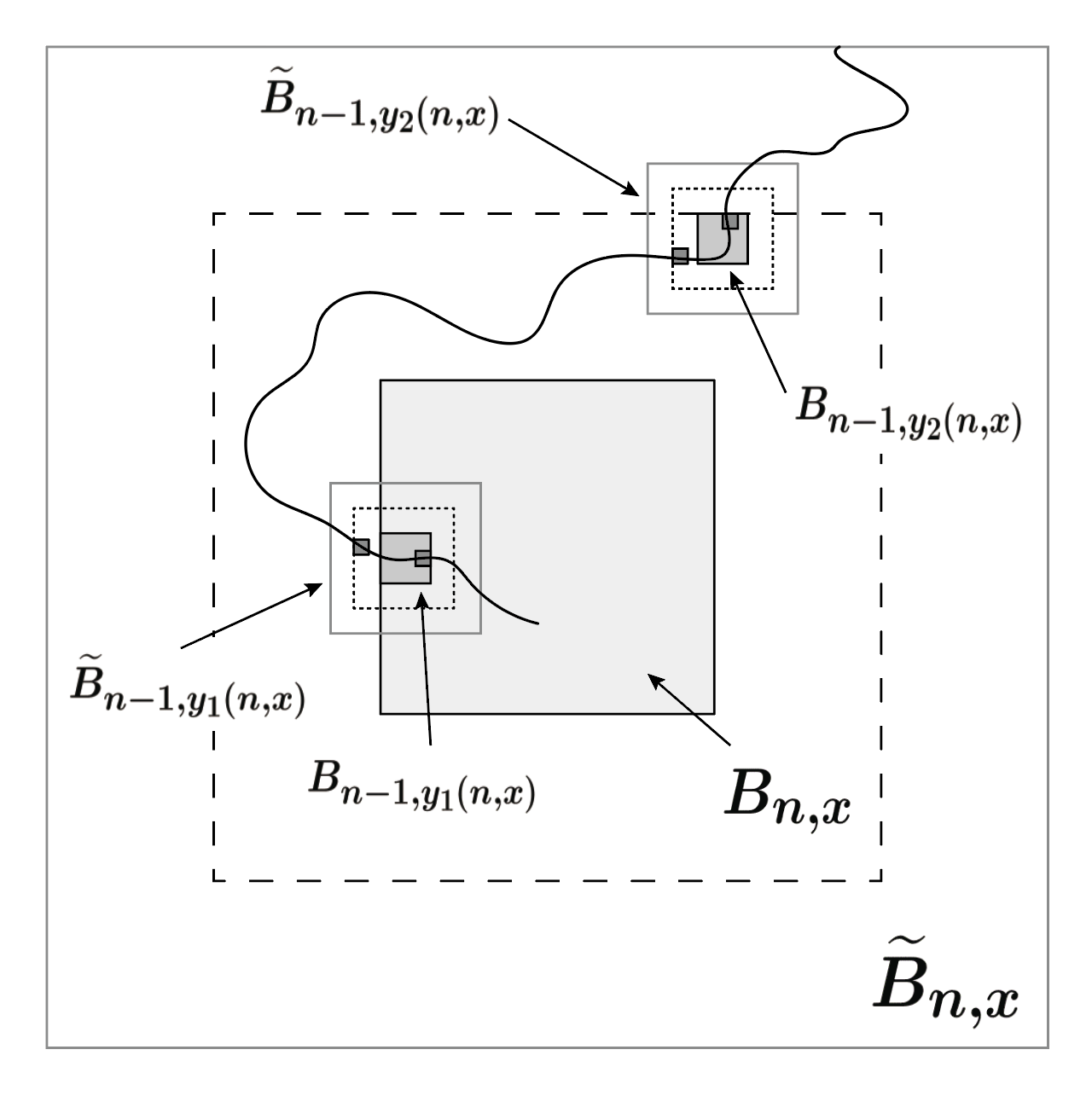}
	\caption{An	illustration of the renormalization scheme.}
	\label{figure1}
\end{figure}

To get an upper bound for the probability of event $F_{n,x}$, we need the following estimate of the diameter of a geometrically killed random walk:
\begin{lemma}\label{lemma_diameter}
	For $d\ge 3$, suppose that $ \left\lbrace X_i \right\rbrace_{i=0}^{N_T}$ is a geometrically killed random walk with law $P_0^{(T)}$. Then there exist $c_3(d),C_3(d)>0$ such that for any $T\ge 1$ and $t>0$, 
	\begin{equation}
		P\left(\max_{0\le i\le N_T}\left|X_i\right|\ge t\cdot n_T\right)\le C_3e^{-c_3t^{\frac{2}{3}}}.  
	\end{equation}
\end{lemma}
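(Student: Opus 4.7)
The plan is to decompose the event $\{\max_{0 \le i \le N_T}|X_i| \ge t n_T\}$ by splitting on whether the random step-count $N_T$ is moderate or atypically large, and then to apply two independent tail bounds: the geometric tail of $N_T$, and a Gaussian-type maximal inequality for an unkilled simple random walk of deterministic length. Specifically, for a threshold $M=M(t,T)$ to be optimized, I would write
\begin{equation*}
P\left(\max_{0\le i\le N_T}|X_i|\ge tn_T\right)\le P(N_T\ge M)+P\left(\max_{0\le i\le M}|X_i|\ge tn_T\right).
\end{equation*}
Since $N_T\sim Geo\bigl(\tfrac{1}{T+1}\bigr)$, the first term equals $\bigl(\tfrac{T}{T+1}\bigr)^M\le e^{-M/(T+1)}$. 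For the second term, applying the reflection principle coordinate-by-coordinate (each coordinate of the SRW is a lazy $\pm 1$ walk with bounded increments) and Azuma--Hoeffding gives the estimate $P(\max_{0\le i\le M}|X_i|\ge tn_T)\le 4d\exp\bigl(-c(tn_T)^2/M\bigr)=4d\exp(-ct^2 T/M)$, valid whenever $tn_T\le M$.

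The natural choice is $M:=Tt^{2/3}$. Then the geometric-tail contribution is at most $\exp(-c t^{2/3})$, and the SRW contribution is at most $4d\exp(-c t^{4/3})$, which is much smaller. One has to check the side condition $tn_T\le M$, which simplifies to $t\le T^{3/2}$, so the Gaussian bound is available throughout this regime and the two contributions together yield the claimed bound $C_3 e^{-c_3 t^{2/3}}$.

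It remains to treat the complementary regime $t>T^{3/2}$, where the Gaussian bound is unavailable. Here one uses the deterministic fact that the walk needs at least $tn_T$ steps to reach $\ell^\infty$ distance $tn_T$ from the origin, so that
\begin{equation*}
P\left(\max_{0\le i\le N_T}|X_i|\ge tn_T\right)\le P(N_T\ge tn_T)\le e^{-c\, tn_T/(T+1)}\le e^{-c'\, t/\sqrt{T}}.
\end{equation*}
Since $t\ge T^{3/2}$ is equivalent to $t/\sqrt{T}\ge t^{2/3}$, the right-hand side is bounded by $e^{-c' t^{2/3}}$, and combining with the previous regime and adjusting constants finishes the proof. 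I do not anticipate a genuine obstacle: the argument is a textbook large-deviation balance, and the exponent $2/3$ is actually a concession to simplicity (the same method yields $e^{-ct}$ upon taking $M=tT$), which is more than what the later sections require.
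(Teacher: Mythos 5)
Your proof is correct and follows essentially the same route as the paper: the paper's display (4.8) is exactly your decomposition with cutoff $M = t^{2-2\delta}n_T^2$ and the choice $\delta = 2/3$ (i.e., $M \asymp Tt^{2/3}$), after which the paper invokes a maximal inequality for SRW (Theorem 1.5.1 of Lawler) where you use reflection plus Azuma--Hoeffding. Your extra treatment of the regime $t > T^{3/2}$ is harmless but not needed, since the Azuma bound holds without the side condition $tn_T \le M$ (and is vacuous when $tn_T > M$).
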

\begin{proof}[Proof of Lemma \ref{lemma_diameter}]
	Similar to (7.5) in \cite{10.1214/21-ECP424}, we have: for any $\delta\in (0,1)$ and $t>0$,  
	\begin{equation}\label{4.8}
		\begin{split}
			P\left(\max_{0\le i\le N_T}\left|X_i\right|\ge t\cdot n_T\right)\le& \left(\frac{T}{T+1}\right)^{t^{2-2\delta}n_T^2}+P\left(\max_{0\le i\le t^{2-2\delta}n_T^2}\left|X_i\right|\ge t\cdot n_T\right).
		\end{split}
	\end{equation}
	By Theorem 1.5.1 in \cite{lawler2013intersections} and (\ref{4.8}), 
	\begin{equation}\label{4.9}
		P\left(\max_{0\le i\le N_T}\left|X_i\right|\ge t\cdot n_T\right)\le e^{-c\cdot t^{2-2\delta}}+C(d)e^{-t^{\delta}}. 
	\end{equation}
	Hence, by taking $\delta=\frac{2}{3}$ in (\ref{4.9}), we have
	\begin{equation}
		P\left(\max_{0\le i\le N_T}\left|X_i\right|\ge t\cdot n_T\right)\le C'(d)e^{-c'(d)t^{\frac{2}{3}}}. 
	\end{equation} 
\end{proof}

Note that when event $F_{n,x}$ happens, the number of paths in $\mathcal{FI}^{C_1\cdot F_d(T)^{-1},T}$ that start from $\left\lbrace z\in \mathbb{Z}^d:d\left(z, \widetilde{B}_{n,x} \right)>L_n \right\rbrace $ and intersect $\widetilde{B}_{n,x}$ equals to 0. By Lemma \ref{lemma_diameter}, we get the following estimate for the event $F_{n,x}$: when $d=3$, for any $T\ge 1$, 
\begin{equation}\label{4.11}
	\begin{split}
		P\left[\left(F_{n,x} \right)^c \right]\le& 1-\exp(-\frac{6C_1F_{3}(T)^{-1}}{T+1}\sum_{z\in \mathbb{Z}^d:d\left(z, \widetilde{B}_{n,x} \right)>L_n }P_z^{(T)}\left(\left\lbrace X_i \right\rbrace_{i=0}^{N_T}\ \text{intersects}\ \widetilde{B}_{n,x} \right) )\\
		\le & \frac{6C_1F_{3}(T)^{-1}}{T+1}\sum_{z\in \mathbb{Z}^d:d\left(z, \widetilde{B}_{n,x} \right)>L_n }P_z^{(T)}\left(\max_{0\le i\le N_T}\left|X_i\right|\ge d\left(z, \widetilde{B}_{n,x} \right)\right)\\
		\le &\frac{6C_1F_{3}(T)^{-1}}{T+1} \sum_{m=L_n+1}^{\infty}C''m^{2}e^{-c_3\left(\frac{m}{n_T}\right)^{\frac{2}{3}} }.
	\end{split}
\end{equation}
Meanwhile, for any $k\ge n$, by the fact that $l_0^{\frac{2k}{3}}\ge l_0^{\frac{2n}{3}}+l_0^{\frac{2(k-n)}{3}}-1$, we have 
\begin{equation}\label{4.11.5}
	\begin{split}
		\sum_{m=L_k+1}^{L_{k+1}}m^{2}e^{-c_3\left(\frac{m}{n_T}\right)^{\frac{2}{3}} }\le& L_{k+1}\cdot L_{k+1}^{2}e^{-c_3C_0l_0^{\frac{2k}{3}}}\\
		\le &e^{c_3C_0}l_0^3\cdot n_T^3\cdot \left( l_0^{3n} e^{-c_3C_0l_0^{\frac{2n}{3}}}\right) \cdot \left( l_0^{3(k-n)} e^{-c_3C_0l_0^{\frac{2(k-n)}{3}}}\right). 
	\end{split}
\end{equation}
Combining (\ref{4.11}) and (\ref{4.11.5}), we have: for any $T\ge 1$,
\begin{equation}\label{4.11.6}
	\begin{split}
		P\left[\left(F_{n,x} \right)^c \right]\le& 6C_1C''e^{c_3C_0}l_0^3\cdot \left( \sum_{m=0}^{\infty} l_0^{3m} e^{-c_3C_0l_0^{\frac{2m}{3}}}\right) \cdot\left( l_0^{3n} e^{-c_3C_0l_0^{\frac{2n}{3}}}\right).
	\end{split}
\end{equation}
Choose a sufficiently large $l_0>0$ such that $l_0^{3n}<e^{0.5c_3l_0^{\frac{2n}{3}}}$ holds for all $n\ge 1$. Thus by (\ref{4.11.6}), for any $T\ge 1$, $C_0>1$ and $C_1<\left(6C''e^{c_3C_0}\sum_{m=0}^{\infty}l_0^{3m}e^{-c_3l_0^{\frac{2m}{3}}} \right)^{-1} $, we have 
\begin{equation}\label{PFnx}
	P\left[\left(F_{n,x} \right)^c \right]\le e^{-0.5c_3C_0l_0^{\frac{2n}{3}}}. 
\end{equation}

Now we are ready to prove the lower bound of Theorem \ref{theorem1} for $d=3$. The skills used in this proof are very similar to Proposition 5.1 in \cite{cai2020chemical} and Theorem 1 in \cite{cai2021continuity}:
\begin{proof}
	For any $x\in \mathbb{Z}^d$ and $n\ge 0$, we denote that $A_{n,x}=\left\lbrace B_{n,x}\xleftrightarrow[]{\mathcal{FI}^{C_1\cdot F_3(T)^{-1},T}}\widetilde{B}_{n,x}  \right\rbrace $. By (2.14) in \cite{rodriguez2013phase}, we have the following decomposition: 
	\begin{equation}\label{4.12}
		A_{n,x}\subset \bigcup_{\mathcal{T}\in \Lambda_{n,x}}A_{\mathcal{T}},
	\end{equation}
	where $A_{\mathcal{T}}=\bigcap\limits_{(0,y)\in \mathcal{T}\cap \mathcal{I}_0}A_{0,y}$. One may see Figure \ref{figure1} for an illustration of this decomposition. We consider the truncated event of $A_{n,x}$ as follows:
	\begin{equation}
		\hat{A}_{n,x}=\left\lbrace B_{n,x} \xleftrightarrow[]{\overline{\mathcal{FI}}} \widetilde{B}_{n,x} \right\rbrace,
	\end{equation}
where $\overline{\mathcal{FI}}=\sum\limits_{\eta\in \mathcal{FI}^{C_1\cdot F_3(T)^{-1},T}: d\left( \eta(0),\widetilde{B}_{n,x}\right)\le L_n} \delta_{\eta}$. By definition, we immediately have $\left( A_{n,x}\cap F_{n,x}\right) \subset \hat{A}_{n,x}\subset A_{n,x}$. 
	
	Recall the definition of $\Lambda_{n,x}$ in (\ref{lambda}). For each $\mathcal{T}\in \Lambda_{n,0}$, suppose that $\mathcal{T}\cap \mathcal{I}_{n-1}=\left\lbrace (n-1,x_1),(n-1,x_2) \right\rbrace $. Then $\mathcal{T}$ can be divided into two parts (denoted by $\mathcal{T}_1$ and $\mathcal{T}_2$) such that for any $i\in \{1,2\}$ and $(0,z)\in \mathcal{T}_i$, one has $z\in \widetilde{B}_{n-1,x_i}$. Noting that $d\left(\widetilde{B}_{n-1,x_1},\widetilde{B}_{n-1,x_2} \right)>10L_{n-1}$, we have that the events $\bigcap\limits_{(0,z)\in \mathcal{T}:z\in \widetilde{B}_{n-1,x_1}}\hat{A}_{0,z}$ and $ \bigcap\limits_{(0,z)\in \mathcal{T}:z\in \widetilde{B}_{n-1,x_2}} \hat{A}_{0,z}$ are independent. Therefore, by (\ref{PFnx}) we have 
	\begin{equation}\label{4.14}
		\begin{split}
			P\left(A_{\mathcal{T}}\right)
			=&P\left[ \left(\bigcap_{(0,z)\in \mathcal{T}:z\in \widetilde{B}_{n-1,x_1}}A_{0,z}\right) \cap \left( \bigcap_{(0,z)\in \mathcal{T}:z\in \widetilde{B}_{n-1,x_2}} A_{0,z}\right)\right] \\
			\le &  P\left[ \left(\bigcap_{(0,z)\in \mathcal{T}:z\in \widetilde{B}_{n-1,x_1}}\hat{A}_{0,z}\right) \cap \left( \bigcap_{(0,z)\in \mathcal{T}:z\in \widetilde{B}_{n-1,x_2}} \hat{A}_{0,z}\right)\right]\\
			&+P\left[ \left(F_{n-1,x_1} \right)^c\right] +P\left[ \left(F_{n-1,x_2}\right)^c\right]\\
			\le&P\left(\bigcap_{(0,z)\in \mathcal{T}:z\in \widetilde{B}_{n-1,x_1}}\hat{A}_{0,z}\right)\cdot P\left( \bigcap_{(0,z)\in \mathcal{T}:z\in \widetilde{B}_{n-1,x_2}} \hat{A}_{0,z}\right)+2e^{-0.5c_3C_0l_0^{\frac{2(n-1)}{3}}}\\
			\le &P\left(\bigcap_{(0,z)\in \mathcal{T}:z\in \widetilde{B}_{n-1,x_1}}A_{0,z}\right)\cdot P\left( \bigcap_{(0,z)\in \mathcal{T}:z\in \widetilde{B}_{n-1,x_2}} A_{0,z}\right)+2e^{-0.5c_3C_0l_0^{-\frac{2}{3}}\cdot 2^n}.
		\end{split}
	\end{equation}
	By induction and (\ref{4.14}), for each $n\ge 1$, 
	\begin{equation}\label{4.15}
		\begin{split}
			P\left(A_{\mathcal{T}}\right)+2e^{-0.5c_3C_0l_0^{-\frac{2}{3}}\cdot 2^n}
			\le & \left[P\left(\bigcap_{(0,z)\in \mathcal{T}:z\in \widetilde{B}_{n-1,x_1}}A_{0,z}\right)+2e^{-0.5c_3C_0l_0^{-\frac{2}{3}}\cdot 2^{n-1}}\right]\\
			& \cdot \left[P\left(\bigcap_{(0,z)\in \mathcal{T}:z\in \widetilde{B}_{n-1,x_1}}A_{0,z}\right)+2e^{-0.5c_3C_0l_0^{-\frac{2}{3}}\cdot 2^{n-1}}\right]\\
			\le &...\\
			\le &\left[P\left(A_{0,0}\right)+2e^{-0.5c_3C_0l_0^{-\frac{2}{3}}}\right]^{2^n}.  
		\end{split}
	\end{equation}
	
	For any $N>L_2$, let $n_0:=\max\left\lbrace m:L_m\le N \right\rbrace $. By (\ref{4.5}), (\ref{4.12}) and (\ref{4.15}), we have \begin{equation}\label{4.16}
		\begin{split}
			P\left(0\xleftrightarrow[]{\mathcal{FI}^{C_1\cdot F_3(T)^{-1},T}} \partial\left[-N,N \right) ^d   \right) 
			\le P\left(A_{n_0,0} \right)
			\le \left(c_0l_0^{2(d-1)} \right)^{2^n} \left[P\left(A_{0,0}\right)+2e^{-0.5c_3C_0l_0^{-\frac{2}{3}}}\right]^{2^n}. 
		\end{split}
	\end{equation}
	By (\ref{PFnx}) and the fact that $A_{0,0}\subset \hat{A}_{0,0}\cup \left( F_{0,0}\right)^c$, we have: for all $T\ge 1$,
		\begin{equation}\label{4.17}
			\begin{split}
				P\left(A_{0,0}\right)\le& P\left(\hat{A}_{0,0}\right)+P\left[ \left(F_{0,0}\right)^c\right] \\
				\le &P\left[ \exists\ \eta\in \mathcal{FI}^{C_1\cdot F_3(T)^{-1},T}\ \text{such that}\ d\left( \eta(0),\widetilde{B}_{0,0}\right)\le L_0  \right]+ e^{-0.5c_3C_0}\\
				\le  &1-\exp(-\frac{6C_1T^{-\frac{1}{2}}}{T+1}\cdot \left(3C_0n_T\right)^3 )+ e^{-0.5c_3C_0}\\
				\le &\frac{6C_1T^{-\frac{1}{2}}}{T+1}\cdot \left(3C_0n_T\right)^3+ e^{-0.5c_3C_0}\\
				\le &162C_1C_0^3+ e^{-0.5c_3C_0}.
			\end{split}
	\end{equation}
	Fix a sufficiently large $C_0$ such that $2e^{-0.5c_3C_0l_0^{-\frac{2}{3}}}+e^{-0.5c_3C_0}<0.25\left(c_0l_0^{2(d-1)} \right)^{-1}$ and then take some $C_1<\left(6C''e^{c_3C_0}\sum_{m=0}^{\infty}l_0^{3m}e^{-c_3l_0^{\frac{2m}{3}}} \right)^{-1}$ such that $162C_1C_0^3<0.25\left(c_0l_0^{2(d-1)} \right)^{-1}$. Thus, combining (\ref{4.16}) and (\ref{4.17}), we have 
		\begin{equation}
			P\left(0\xleftrightarrow[]{\mathcal{FI}^{C_1\cdot F_3(T)^{-1},T}} \partial\left[-N,N \right) ^d   \right)\le 2^{-2^{n_0}}, 
	\end{equation}
	which implies that $\mathcal{FI}^{C_1\cdot F_d\left(T\right)^{-1},T }$ does not percolate for all $T\ge 1$. 
\end{proof}

\subsection{Capacities of random walk trajectories}\label{section4.2}

Before turning to the proof in the case $d\ge 4$, we need to give some estimates on the expectations of capacities of random walk trajectories. 

We first cite a classical estimate for the expectations of capacities of simple random walk trajectories. 
\begin{lemma}[Theorem 2 in \cite{jain1968range}; Corollary 1.4, Proposition 1.5 in \cite{asselah2018capacity}]\label{lemma_cap}
	Suppose that $\left\lbrace X_i\right\rbrace_{i=0}^{\infty}$ is a simple random walk on $\mathbb{Z}^d$. For $d\ge 3$, there exist $C_4(d)$ and $C_5(d)>0$ such that for all $n\ge 2$, 
	\begin{equation}\label{new_34}
		C_4F_d(n)	\le 	E\left[ cap\left(\left\lbrace X_0,...,X_{n}\right\rbrace  \right)   \right]\le C_5F_d(n). 
	\end{equation}
\end{lemma}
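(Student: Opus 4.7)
The plan is, in effect, to invoke the classical estimates of Jain--Orey \cite{jain1968range} and Asselah--Schapira--Sousi \cite{asselah2018capacity} as a black box, since Lemma \ref{lemma_cap} is quoted verbatim from these references. Nevertheless, to make the dimension-dependent orders feel natural, I would first recall the underlying structural heuristic. Starting from the identity $cap(\{X_0,\ldots,X_n\}) = \sum_{k=0}^n P_{X_k}\left( \bar H_{\{X_0,\ldots,X_n\}} = \infty \right)$, where the inner probability is over an independent auxiliary SRW, I take expectations and condition on $X_k$. By the Markov property, each summand becomes the probability that an independent SRW started at $X_k$ avoids both the ``past'' $\{X_0,\ldots,X_{k-1}\}$ and the ``future'' $\{X_{k+1},\ldots,X_n\}$ of the original walk. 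Via time-reversal and translation invariance, this reduces cleanly to a two-walk intersection probability for walks of lengths $k$ and $n-k$.

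Next, I would feed in the standard intersection-probability estimates (Theorem 3.3.2 of \cite{lawler2013intersections}). For $d\ge 5$, two independent SRWs miss each other with probability uniformly bounded away from $0$, so a positive fraction of indices $k$ contribute, giving $E[cap]=\Theta(n)$; the matching $O(n)$ upper bound is immediate from $cap(A)\le |A|$. For $d=4$ the miss probability decays like $1/\log n$, which produces the $n/\log n$ order. For $d=3$ the trajectory concentrates with high probability inside a box of side $O(\sqrt n)$; combined with $cap(B_0(m))\asymp m^{d-2}=m$ from \eqref{cap_box}, this yields the upper bound $O(\sqrt n)$, while a matching lower bound is obtained by restricting attention to the initial segment of $X$ before it first exits $B_0(\sqrt n)$ and applying a standard last-exit argument to that segment.

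The main obstacle, which is what the cited references actually address, is to convert these first-moment heuristics about ``exposed'' points into expected-capacity bounds with constants uniform in $n$. This requires controlling boundary effects, near-returns of the walk, and variance of the intersection count, which in dimensions $3$ and $4$ are especially delicate because of recurrence-like behavior of two independent walks. Since all of this is already rigorously carried out in \cite{jain1968range} for $d\ge 5$ and in \cite{asselah2018capacity} for $d=3,4$, my proposal is simply to cite these works and use the stated bounds directly in the downstream arguments of Sections \ref{section4.2} and \ref{section4.3}.
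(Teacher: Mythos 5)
Your proposal is correct and matches the paper exactly: Lemma \ref{lemma_cap} is stated in the paper as a cited result, with no proof given beyond the references to Jain--Orey and Asselah--Schapira--Sousi, which is precisely what you do. The additional heuristic discussion you include is consistent with the remark in the paper's introduction linking these orders to intersection probabilities, but it is not needed for the paper's argument.
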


Based on the lemma above, we show in this proof an upper bound for the expectations of  $T$-capacities of geometrically killed random walk trajectories with law $P_0^{(T)}$. To be precise, 
\begin{proposition}\label{lemma_capT}
	Suppose that $\left\lbrace X_i\right\rbrace_{i=0}^{N_T}$ is a geometrically killed random walk on $\mathbb{Z}^d$ with law $P_0^{(T)}$. For $d\ge 4$, there exists $C_6(d)>0$ such that for all $T\ge 2$, 
	\begin{equation}
		E\left[cap^{(T)}\left(\left\lbrace X_0,...,X_{N_T}\right\rbrace  \right)  \right]\le C_6F_d(T).  
	\end{equation}
\end{proposition}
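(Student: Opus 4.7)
The plan is to decompose $cap^{(T)}(A)$ as a sum along the trajectory $A=\{X_0,\ldots,X_{N_T}\}$ and apply two different estimates in the two regimes $d\ge5$ and $d=4$. Since each vertex of $A$ equals $X_k$ for some $0\le k\le N_T$, one has the set-theoretic inequality
\begin{equation*}
cap^{(T)}(A)=\sum_{x\in A}Es_A^{(T)}(x)\le\sum_{k=0}^{N_T}Es_A^{(T)}(X_k),
\end{equation*}
so after taking expectation and applying Fubini, $E[cap^{(T)}(A)]\le\sum_{k\ge 0}E\bigl[Es_A^{(T)}(X_k)\mathbbm{1}\{N_T\ge k\}\bigr]$, and it suffices to control each summand uniformly in $k$, then use $\sum_k P(N_T\ge k)=T+1$.

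For $d\ge5$, the trivial bound $Es_A^{(T)}(X_k)\le 1$ already gives $E[cap^{(T)}(A)]\le T+1\le C_6\,F_d(T)$. The genuinely delicate case is $d=4$, where the bound must be improved by a factor $(\log T)^{-1}$. Fix $k\ge 0$ and condition on $\sigma(X_0,\ldots,X_k)$ together with $\{N_T\ge k+1\}$. By memorylessness of the geometric distribution, the future piece $A^{(+)}:=\{X_{k+1},\ldots,X_{N_T}\}\subset A$ has, conditionally, the law of an independent $T$-killed simple random walk starting at the nearest neighbor $X_{k+1}$ of $X_k$, so
\begin{equation*}
Es_A^{(T)}(X_k)\le P_{X_k}^{(T)}\bigl(\bar H_{A^{(+)}}=\infty\bigr),
\end{equation*}
which is exactly the probability that two independent $T$-killed SRWs starting at the nearest neighbors $X_k$ and $X_{k+1}$ have disjoint ranges (modulo the time-$0$ vertex of the probe).

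By Theorem 3.3.2 of \cite{lawler2013intersections}, for two deterministic-length SRWs of common length $n$ starting at nearest neighbors in $\mathbb{Z}^4$, this no-intersection probability is $O(1/\log n)$. To pass to the geometrically killed setting, I would split on the dominant event $\{\min(N_T,\tilde N_T)\ge T/2\}$ — on which $\log\min\asymp\log T$ and Lawler's bound directly yields $C/\log T$ — versus its complement, which is handled by using Lawler at length $\asymp\sqrt T$ together with $P(N_T<\sqrt T)=O(1/\sqrt T)$. The boundary case $N_T=k$ contributes only a factor $1/(T+1)\ll 1/\log T$. Collecting everything gives $E\bigl[Es_A^{(T)}(X_k)\mathbbm{1}\{N_T\ge k\}\bigr]\le C(\log T)^{-1}P(N_T\ge k)$, and summing over $k$ yields $E[cap^{(T)}(A)]\le C(T+1)/\log T\le C_6\,F_4(T)$ for large $T$. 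The main obstacle is precisely this transfer of Lawler's fixed-length intersection estimate to two killed walks of random length: since $P(N_T\ll T)$ is not negligible, one cannot just plug $n=T$ into Lawler's bound and must handle the small-length tails explicitly via the geometric tail estimates already used in Lemma \ref{lemma_diameter}.
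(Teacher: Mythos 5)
Your $d\ge 5$ argument is fine and matches the paper's (subadditivity/trivial escape bound giving $T+1$). The $d=4$ case, however, has a genuine gap at its central step. You bound $Es_A^{(T)}(X_k)$ by the probability that an independent probe walk from $X_k$ avoids only the \emph{future} piece $A^{(+)}=\{X_{k+1},\dots,X_{N_T}\}$, and you claim that for two length-$n$ walks started at nearest neighbors in $\mathbb{Z}^4$ the no-intersection probability is $O(1/\log n)$, attributing this to Theorem 3.3.2 of Lawler. That estimate is false: the non-intersection probability for two independent walks started at the same point (or neighbors) in the critical dimension $d=4$ is of order $(\log n)^{-1/2}$, not $(\log n)^{-1}$ — this is exactly Theorem 4.4.1 of Lawler, quoted in the paper as Lemma \ref{lemma_lawler2}. (Theorem 3.3.2 concerns \emph{long-range} intersections of walks started order-$\sqrt{n}$ apart, where the \emph{intersection} probability is $\asymp 1/\log n$; you have conflated the two.) With the correct exponent your per-summand bound is only $C(\log T)^{-1/2}$, and summing over $k$ yields $E[cap^{(T)}]\le CT(\log T)^{-1/2}$, which is much larger than $F_4(T)=T/\log T$. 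This is not a technicality: the extra half power of the logarithm can only be recovered by making the probe avoid \emph{both} the past and the future of the trajectory, which is essentially the content of the nontrivial capacity asymptotics of Asselah--Berestycki--Schapira--Sousi.

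The paper circumvents exactly this difficulty by never estimating $Es_A^{(T)}(X_k)$ directly. It writes $cap^{(T)}(R_{N_T})=cap(R_{N_T})+\bigl(cap^{(T)}(R_{N_T})-cap(R_{N_T})\bigr)$, controls $E[cap(R_{N_T})]\le C_5F_4(N_T)$ by the known result (Lemma \ref{lemma_cap}), and observes that the difference of escape probabilities at $X_i$ is the probability that the probe \emph{would} return to the range but is killed first. This splits into the event that the killing time is at most $2F_4(T)$ (total contribution $3F_4(T)$) and the event of a \emph{late} return, after time $2F_4(T)$; the latter is handled by the long-range estimate $P[X[0,n]\cap X[2n,\infty)\neq\emptyset]\le C/\log n$ (Lemma \ref{lemma_lawler}) combined, on each dyadic scale, with the local non-intersection bound of Lemma \ref{lemma_lawler2}. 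If you want to salvage your decomposition, you would need to replace your single "avoid the future" step by an argument of this type that uses both halves of the trajectory, or else invoke the killed-capacity analogue of the ABSS result directly — which is precisely what the proposition is being asked to establish.
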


\begin{remark}
	As a corollary of Lemma \ref{lemma_cap} and the fact that $cap^{(T)}(A)\ge cap(A)$ for all finite set $A\subset \mathbb{Z}^d$, there exists $C(d)>0$ such that $E\left[cap^{(T)}\left(\left\lbrace X_0,...,X_{N_T}\right\rbrace  \right)  \right]\ge C\cdot F_d(T)$ holds for all $d\ge 3$ and $T\ge 2$. Noting that the upper bound is straight, we also have the same order in (\ref{new_34}) for the expectations of $T$-capacities for geometrically killed random walks on $\mathbb{Z}^d$.
\end{remark}

In preparation for the proof of Lemma \ref{lemma_capT}, we cite two useful results on the intersecting probability of simple random walks as follows:
\begin{lemma}[Proposition 10.1.1, \cite{lawler2010random}]\label{lemma_lawler}
	Suppose that $X_\cdot$ is a simple random walk on $\mathbb{Z}^4$. For any $0\le m\le n< \infty$, let $X[m,n]=\left\lbrace X_j:m\le j\le n \right\rbrace $ and $X\left[ m,\infty\right) =\left\lbrace X_j:m\le j<\infty \right\rbrace$. Then there exists $C_7>0$ such that for all $n\ge 2$, 
	\begin{equation}
		P\left[X[0,n]\cap X\left[ 2n,\infty \right) \neq\emptyset \right]\le C_7 \left[ \log(n)\right] ^{-1}.
	\end{equation} 
\end{lemma}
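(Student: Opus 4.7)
The plan is to reduce the intersection event to a hitting event via the strong Markov property applied twice, and then combine the classical last-exit decomposition with the capacity estimate of Lemma \ref{lemma_cap}. Writing $A := X[0, n]$, the dimension-four feature that ultimately produces the $(\log n)^{-1}$ denominator will be the product between the capacity bound $E[cap(X[0,n])] \le C_5 \cdot n/\log n$ and a uniform \emph{averaged} Green's-function bound of order $1/n$.

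Concretely, let $\widehat P_z$ denote the law of an auxiliary SRW started from $z$ and independent of everything else. First I would apply the strong Markov property at time $2n$ to identify
\begin{equation*}
P\bigl[X[0,n] \cap X[2n, \infty) \ne \emptyset\bigr] \;=\; E\bigl[\widehat P_{X_{2n}}(H_A < \infty)\bigr].
\end{equation*}
The standard last-exit decomposition for transient SRW (e.g.\ Proposition 4.6.4 of \cite{lawler2010random}) then yields
\begin{equation*}
\widehat P_z(H_A < \infty) \;=\; \sum_{y \in A} G(z, y)\, Es_A(y),
\end{equation*}
where $G$ is the SRW Green's function and $\sum_y Es_A(y) = cap(A)$. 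Next I would condition on $\mathcal F_n$: the set $A$ and the equilibrium measure $Es_A$ are both $\mathcal F_n$-measurable, and a second application of the strong Markov property at time $n$ shows that $X_{2n} - X_n$ is an independent $n$-step SRW increment. For fixed $x, y \in \mathbb{Z}^4$ this gives
\begin{equation*}
E\bigl[G(X_{2n}, y) \mid X_n = x\bigr] \;=\; \sum_{z \in \mathbb{Z}^4} p_n(x, z)\, G(z, y) \;=\; \sum_{m \ge n} p_m(x, y),
\end{equation*}
and the local central limit theorem in $d = 4$ supplies the uniform bound $p_m(x, y) \le C\, m^{-2}$, so the sum is at most $C/n$ regardless of $x$ and $y$. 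Combining these pieces, $E[\sum_{y \in A} G(X_{2n}, y)\, Es_A(y) \mid \mathcal F_n] \le (C/n)\, cap(A)$; taking outer expectation and invoking $E[cap(X[0,n])] \le C_5 \cdot n/\log n$ from Lemma \ref{lemma_cap} in dimension four produces the desired $C_7/\log n$.

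The delicate point will be the uniformity in $x, y$ of the heat-kernel bound $p_m(x, y) \le C\, m^{-2}$, which must survive at the diagonal $y = x$: this is exactly the dimension-four feature (the exponent $d/2 = 2$) that turns a $O(n/\log n)$ capacity bound into a $O(1/\log n)$ probability bound. A more naive argument using only the pointwise estimate $G(z, y) \le C |z - y|^{-2}$ would fail, since contributions from $y$ very close to $X_{2n}$ could inflate the sum by a factor of order $n$ and destroy the $1/\log n$ saving; averaging $G(X_{2n}, y)$ over the intermediate $n$ steps $X[n, 2n]$ is precisely what regularizes the short-range singularity into a uniform $1/n$ bound.
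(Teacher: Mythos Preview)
The paper does not prove this lemma; it is quoted from Proposition~10.1.1 of \cite{lawler2010random} without argument. Your proof is correct: the identity $E[G(X_{2n},y)\mid\mathcal F_n]=\sum_{m\ge n}p_m(X_n,y)\le C/n$ (uniform in $y$, via the local CLT bound $p_m\le Cm^{-2}$ in $d=4$) combined with $E[cap(X[0,n])]\le C_5\,n/\log n$ from Lemma~\ref{lemma_cap} yields the bound directly.

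For comparison, Lawler's original argument in \cite{lawler2010random} does \emph{not} pass through the capacity asymptotics of Lemma~\ref{lemma_cap}. Instead it uses a second-moment estimate (Lemma~10.1.2 there, which this paper also invokes at \eqref{Aj} in the proof of Proposition~\ref{lemma_capT}) to show that with probability $1-O(n^{-2})$ one has $Y:=\min_{0\le i\le n}\sum_{j=0}^{n}g(X_i,X_j)\ge\gamma\log n$; on that event, the strong Markov property at the first intersection time gives the conditional bound $\hat P_{X_{2n}}(H_{X[0,n]}<\infty)\le Y^{-1}\sum_{j=0}^{n}g(X_j,X_{2n})$, and the numerator has expectation $O(1)$. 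Your route is shorter and conceptually transparent because it offloads the work to Lemma~\ref{lemma_cap}; Lawler's route is more self-contained in that it does not require the $d=4$ capacity asymptotics as a prerequisite.
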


\begin{lemma}[Theorem 4.4.1, \cite{lawler2013intersections}]\label{lemma_lawler2}
	Suppose that $X_\cdot$ and $X'_\cdot$ are two independent simple random walks starting from $0$ on $\mathbb{Z}^4$. Then there exists $C_8>0$ such that for all $n\ge2$, 
	\begin{equation}
		P\left( X[1,n]\cap X'[1,n]=\emptyset \right)\le C_8\left[ \log(n) \right] ^{-\frac{1}{2}}. 
	\end{equation}
\end{lemma}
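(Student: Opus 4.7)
I would prove Lemma \ref{lemma_lawler2} via the second-moment method applied to the intersection-counting variable, combined with a refinement that invokes 4-dimensional random-walk potential theory. The argument follows Lawler's classical approach for the marginal dimension.

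Define the intersection count
\begin{equation}
J_n := \sum_{i=1}^{n}\sum_{j=1}^{n}\mathbf{1}\{X_i = X'_j\},
\end{equation}
so that the event in question is exactly $\{J_n = 0\}$. First I would compute $E[J_n]$: by translation invariance and independence, $P(X_i = X'_j) = \sum_y p_i(0,y)p_j(0,y) = p_{i+j}(0,0)$, and the local central limit theorem in $\mathbb{Z}^4$ gives $p_k(0,0)\asymp k^{-2}$. Grouping terms by $k=i+j$ yields $E[J_n]\asymp\sum_{k=2}^{2n} k\cdot k^{-2}\asymp \log n$. A parallel computation for pairs of intersections, using the Markov property to factorize $P(X_i=X'_j,\,X_{i'}=X'_{j'})=p_{i+j}(0,0)\,p_{|i'-i|+|j'-j|}(0,0)$ and summing, gives $E[J_n^2]\asymp(\log n)^2$.

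The naive Paley--Zygmund bound $P(J_n>0)\ge (E[J_n])^2/E[J_n^2]$ therefore yields only $P(J_n>0)\ge c>0$, which is far too weak: it produces a constant upper bound on $P(J_n=0)$, not the sharp $(\log n)^{-1/2}$ rate. The key refinement is to condition on the range $R_n := X[1,n]$ and to bound the quenched non-intersection probability
\begin{equation}
\rho(R_n) := P\bigl(X'[1,n]\cap R_n = \emptyset \mid R_n\bigr)
\end{equation}
directly via the capacity of $R_n$. By Lemma \ref{lemma_cap}, in $d=4$ one has $\mathrm{cap}(R_n)\asymp n/\log n$, and this quantity concentrates sufficiently well that the atypical event $\{\mathrm{cap}(R_n)\le c\,n/\log n\}$ has probability $o((\log n)^{-1/2})$. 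On the typical event, $X'$ is a transient walk starting essentially on $R_n$ (since $X_1$ is a neighbour of $0$), and a last-exit decomposition together with the Green's-function asymptotic $G(0,y)\asymp |y|^{-2}$ and a truncated-hitting estimate of the type underlying Lemma \ref{lemma_lawler} shows that $\rho(R_n)\le C(\log n)^{-1/2}$. Averaging over $R_n$ closes the argument.

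The main obstacle is precisely this conditional hitting estimate: one must convert the capacity lower bound $\mathrm{cap}(R_n)\gtrsim n/\log n$ into the quantitative statement that $X'$ hits $R_n$ within its first $n$ steps with probability $1-O((\log n)^{-1/2})$. This requires not only the asymptotic order of the capacity but also a concentration bound for it, together with a uniform potential-theoretic inequality relating $\rho(A)$ to $\mathrm{cap}(A)$ for sets $A$ with the geometric profile of a 4-dimensional random-walk range. These ingredients lie at the heart of Lawler's two-walk intersection theory in the marginal dimension and are the technical core of Theorem 4.4.1 in \cite{lawler2013intersections}.
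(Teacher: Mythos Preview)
The paper does not give its own proof of this lemma: it is stated as a direct citation of Theorem~4.4.1 in \cite{lawler2013intersections} and used as a black box in the proof of Proposition~\ref{lemma_capT}. So there is no ``paper's proof'' to compare against.

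Your sketch correctly identifies that the naive Paley--Zygmund/second-moment argument applied to $J_n$ only yields $P(J_n>0)\ge c$, and that a genuine refinement is needed to reach the $(\log n)^{-1/2}$ rate. However, your proposed refinement is essentially circular: you reduce the problem to showing that, on the typical event $\{\mathrm{cap}(R_n)\gtrsim n/\log n\}$, the quenched avoidance probability $\rho(R_n)$ satisfies $\rho(R_n)\le C(\log n)^{-1/2}$, and then concede that this last step ``lies at the heart of'' and is ``the technical core of'' the very theorem you are trying to prove. A capacity lower bound alone does not yield a hitting-probability bound of this precision for a walk of finite length~$n$; one needs exactly the delicate two-walk analysis that constitutes Lawler's proof. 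In other words, you have correctly located where the difficulty lives but have not supplied an argument that resolves it independently of the cited reference.

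Since the paper itself treats this lemma purely as an imported result, no independent proof was required; citing \cite{lawler2013intersections} is what the authors do, and that suffices for the purposes of the paper.
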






\begin{proof}[Proof of Proposition \ref{lemma_capT}]
	For any $m\in \mathbb{N}^+$, we denote that $R_m=\left\lbrace X_0,...,X_{m}\right\rbrace$. 	When $d\ge 5$, by the subadditivity of $T$-capacity, we have: for any $T\ge 2$, 
	\begin{equation}
	\begin{split}
		E\left[cap^{(T)}\left(R_{N_T}\right)\right]\le cap^{(T)}\left(\{0\} \right)\cdot E\left[N_T+1 \right]\le cap^{(2)}\left(\{0\} \right)\cdot \left(T+1\right). 
	\end{split}
	\end{equation}
Thus, Proposition \ref{lemma_capT} holds for $d\ge 5$.

In the rest of this proof, we focus on the case when $d=4$. By definition,
	\begin{equation}\label{A6}
		\begin{split}
			&cap^{(T)}\left(R_{N_T}\right)-cap\left(R_{N_T}\right)\\
			= &\sum_{z\in R_{N_T}}\left(Es_{R_{N_T}}^{(T)}(z)-Es_{R_{N_T}}(z)\right)\\
			= &\sum_{z\in R_{N_T}}P_z^{(T)}\left[ \left\lbrace 1\le N_{T,z}'< \bar{H}_{R_{N_T}}\left(\hat{X}^z_{\cdot}\right)<\infty\right\rbrace \cup   \left\lbrace N'_{T,z}=0\right\rbrace \right] \\
			\le &\sum_{i=0}^{N_T}P\left(N_{T,X_i}'\le 2F_4(T)\right)+\sum_{i=0}^{N_T}P_{X_{i}}\left(2F_4(T)< \bar{H}_{R_{N_T}}\left(\hat{X}^{X_i}_\cdot\right) <\infty\right),
		\end{split}
	\end{equation}
		where $\left\lbrace \hat{X}^z_{\cdot}:z\in R_{N_T}\right\rbrace $ is a sequence of independent simple random walks with law $P_z$, $N_{T,z}'\overset{i.i.d.}{\sim} Geo\left(\frac{1}{T+1}\right)$ and all of them are independent to $X_\cdot$ and $N_T$. 
	
	For the first summation in the RHS of (\ref{A6}), we have 
	\begin{equation}\label{A..6}
		E\left[ \sum_{i=0}^{N_T}P\left(N_{T,i}'\le 2F_4(T)\right)\right]\le  E\left[(N_T+1)\cdot \frac{2F_4(T)+1}{T+1}\right]\le 3F_4(T).  
	\end{equation}
	
	For the second summation, we arbitrarily fix an $N_T\ge 2$. For any $0\le i\le N_T$, by the fact that $\bar{H}_{R_{N_T}}=\bar{H}_{R_{i}}\land \bar{H}_{X[i,N_T]}$, we have 
	\begin{equation}\label{A.3}
		\begin{split}
			&P_{X_{i}}\left( 2F_4(T)< \bar{H}_{R_{N_T}}\left(\hat{X}^{X_i}_\cdot \right) <\infty\right)\\
			\le &P_{X_{i}}\left( 2F_4(T)< \bar{H}_{R_{i}}\left(\hat{X}^{X_i}_\cdot \right)<\infty\right)+P_{X_{i}}\left(2F_4(T)< \bar{H}_{ X[i,N_T]}\left(\hat{X}^{X_i}_\cdot \right)<\infty\right).
		\end{split}
	\end{equation}

	We first estimate the expectation of first term in the RHS of (\ref{A.3}). When $0\le i\le m_T:=\lfloor2F_4(T)\rfloor$, we consider the random walk $Z_\cdot$ defined as follows: for $0\le j\le i$, $Z_j=X_j$; for $j\ge i+1$, $Z_j=\hat{X}^{X_i}_{j-i}$ (recall that $\hat{X}^{X_i}_\cdot$ is a simple random walk starting from $X_i$). Note that $Z_\cdot$ follows the law $P_{X_0}$. Let $m=\lfloor \frac{1}{2}\left(i+ m_T\right)\rfloor$. By Lemma \ref{lemma_lawler} and the fact that $ m\ge \lfloor F_4(T)\rfloor$, 
	\begin{equation}\label{A7}
		\begin{split}
			&E\left[P_{X_{i}}\left( 2F_4(T)< \bar{H}_{R_{i}}\left(\hat{X}^{X_i}_\cdot \right)<\infty\right)\right]\\
			\le  &E\left\lbrace  P_{X_0}\left(  Z\left[0,m \right]\cap Z\left[2m,\infty \right)\neq \emptyset  \right)   \right\rbrace
			\le \frac{C}{\log(T)}.
		\end{split}
	\end{equation}
	When $m_T<i\le N_T$, we denote that $r_0=0$, $s_1=r_1=\lfloor \frac{i+m_T}{2} \rfloor$ and for all $j\ge 2$, let $s_j=\lfloor \frac{i+m_T-r_{j-1}}{2} \rfloor$ and $r_j=r_{j-1}+s_j$. Let $p_0(i)=\min\left\lbrace l\ge 1:i-r_l \le  m_T  \right\rbrace $. Noting that  $r_{p_0+1}\ge i$ and for $1\le j\le p_0(i)+1$, $r_{j}+s_j\in \left\lbrace i+m_T,i+m_T-1 \right\rbrace $, 
	\begin{equation}\label{A9}
		\begin{split}
			&P_{X_{i}}\left( 2F_4(T)< \bar{H}_{R_{i}}\left(\hat{X}^{X_i}_\cdot \right)<\infty\right)\\
			= &P\left( Z[0,i]\cap \left[ i+m_T+1,\infty\right) \neq\emptyset,Z[0,i]\cap Z[i+1,i+m_T]=\emptyset \right)           \\
			\le &\sum_{j=1}^{p_0+1}P\left( Z[r_{j-1},r_{j}] \cap Z\left[ r_{j}+s_j,\infty\right)  \neq\emptyset,Z\left[i-m_T,i-1 \right]\cap Z\left[i+1,i+m_T \right] =\emptyset    \right). 
		\end{split}
	\end{equation}

	We estimate the probability in the RHS of (\ref{A9}) with an argument inspired by the proof of Proposition 10.1.1 in \cite{lawler2010random}. For $1\le j\le p_0-4$, let $Y^j:=\min_{r_{j-1}\le l\le r_j}\sum_{s=r_{j-1}}^{r_j}g\left(Z_{l},Z_{s} \right)$. Assume that $\gamma>0$ is a constant (we will determine it later). Then we define the event $A_j$ as 
	\begin{equation}\label{defAj}
		A_j:=\left\lbrace Y^j> \gamma \log(T) \right\rbrace.  
	\end{equation}
	In fact, events $A_j$ happens with a high probability. Since that for $1\le j\le p_0-4$,  $r_j-r_{j-1}\ge m_T$, we have $Y^j\ge \min\limits_{r_{j-1}\le  m\le r_{j}}\hat{Y}_m^j$, where $\hat{Y}_m^j:=\min\limits_{m\le l\le m+m_T}\sum_{s=m}^{m+m_T}g\left(Z_{l},Z_{s} \right)$. By Lemma 10.1.2 in \cite{lawler2010random}, there exists $\gamma>0$ such that for all $1\le j\le p_0-4$, 
	\begin{equation}\label{Aj}
		P\left[ \left(A_j\right)^c\right]\le P\left[\min_{r_{j-1}\le  m\le r_{j}}\hat{Y}_m^j\le \gamma \log(T) \right]    \le  \sum_{m=r_{j-1}}^{r_{j}}P\left[\hat{Y}^j_m\le \gamma \log(T)\right]\le C'N_T T^{-3}. 
	\end{equation}
	Note that $\left\lbrace Z[i-m_T,i-1]\cap Z[i+1,i+m_T]= \emptyset\right\rbrace $ and $A_j$ are measurable w.r.t. $Z\left[0,i+m_T\right]$. We denote by $\hat{P}$ be the law of $\left\lbrace Z_{k+1}-Z_{k}:k\ge i+m_T \right\rbrace$ and write the expectation under $\hat{P}$ as $\hat{E}$. Let $K^j=\sum_{l=r_{j-1}}^{r_{j}}\sum_{s=r_{j}+s_j}^{\infty}\mathbbm{1}_{Z_l=Z_s}$ be the number of intersecting. Note that $r_{j}+s_j\in \left\lbrace i+m_T,i+m_T-1 \right\rbrace $. Given $Z[0,i+m_T]$, if $r_{j}+s_j=i+m_T-1$ and $Z_{i+m_T-1}\in Z\left[r_{j-1},r_j \right]$, it is immediate that 
		\begin{equation}\label{new46}
		\hat{P}\left(K^j\ge 1\right)=1.
		\end{equation}
 Otherwise (i.e., $r_{j}+s_j=i+m_T$, or $r_{j}+s_j= i+m_T-1$ while $Z_{i+m_T-1}\notin Z\left[r_{j-1},r_j \right] $), 
	\begin{equation}\label{A.15}
		\hat{E}\left[K^j\right]=\hat{E}\left[\sum_{l=r_{j-1}}^{r_{j}}\sum_{s=i+m_T}^{\infty}\mathbbm{1}_{Z_l=Z_s}\right]=\sum_{l=r_{j-1}}^{r_{j}}  g\left(Z_l,Z_{i+m_T} \right).
	\end{equation}
Meanwhile, define that $k_j=\min\left\lbrace k\ge r_{j}+s_j:Z_k\in Z[r_{j-1},r_{j}] \right\rbrace $ and that $l_j=\min\left\lbrace r_{j-1}\le l\le r_{j}: Z_{l_j}=Z_{k_j} \right\rbrace $. By strong Markov property, 
\begin{equation}\label{A.16}
	\begin{split}
		\hat{E}\left[K^j\right]\ge &\hat{E}\left[K^j\cdot \mathbbm{1}_{K^j\ge 1 }\right]\\
		=&\hat{E}\left\lbrace E\left( K^j\big|Z\left[0,k_j \right] \right) \cdot \mathbbm{1}_{K^j\ge 1 }\right\rbrace\\
		\ge & \hat{E}\left[\mathbbm{1}_{K^j\ge 1 }\cdot \sum_{s=r_{j-1}}^{r_j}g\left(Z_s,Z_{l_j}\right) \right]\ge  Y^j\cdot\hat{P}\left(K^j\ge 1\right).
	\end{split}
\end{equation}	
Therefore, by (\ref{new46}), (\ref{A.15}) and (\ref{A.16}), 
\begin{equation}\label{A.17}
	\begin{split}
		\hat{P}\left(K^j\ge 1\right) 
		\le\frac{\sum_{l=r_{j-1}}^{r_{j}} g\left(Z_l,Z_{i+m_T} \right)}{\gamma \log(T)}+\sum_{l=r_{j-1}}^{r_{j}}\mathbbm{1}_{Z_l=Z_{i+m_T-1}}.
	\end{split}
\end{equation}
Combining (\ref{Aj}) and (\ref{A.17}), we have: for any $1\le j\le p_0-4$,
		\begin{equation}\label{A.18}
			\begin{split}
				&P\left( Z[r_{j-1},r_{j}] \cap Z\left[ r_{j}+s_j,\infty\right)  \neq\emptyset,Z\left[i-m_T,i-1 \right]\cap Z\left[i+1,i+m_T \right] =\emptyset    \right)\\
				\le &P\left(K^j\ge 1, Z[i-m_T,i-1]\cap Z[i+1,i+m_T]= \emptyset,A_j \right)
				+C'N_T\cdot T^{-3}\\
				\le & E\left[  \mathbbm{1}_{Z[i-m_T,i-1]\cap Z[i+1,i+m_T]= \emptyset}\left(\frac{\sum_{l=r_{j-1}}^{r_{j}} g\left(Z_l,Z_{i+m_T} \right)}{\gamma \log(T)}+\sum_{l=r_{j-1}}^{r_{j}}\mathbbm{1}_{Z_l=Z_{i+m_T-1}} \right) \right]\\
				 &+ C'N_T T^{-3}. 
			\end{split}
	\end{equation}
	Noting that for $1\le j\le p_0-4$, one has $s_j>4m_T$. Consider the revised random walk $W_\cdot$ starting from $Z_{i-m_T}$, which is defined as: for any $0\le l\le i-m_T$, $W_l:=Z_{i-m_T-l}$. By (4.2) in \cite{rath2010connectivity} and the fact that $i+m_T-r_j\ge s_j$, we have 
\begin{equation}\label{A17}
	\begin{split}
		&E\left(  \frac{\sum_{l=r_{j-1}}^{r_{j}} g\left(Z_l,Z_{i+m_T} \right)}{\gamma \log(T)}+\sum_{l=r_{j-1}}^{r_{j}}\mathbbm{1}_{Z_l=Z_{i+m_T-1}}\bigg| \left\lbrace Z_{k+1}-Z_k\right\rbrace_{i-m_T\le k\le i+m_T-1}\right)  \\
		\le &\left[\gamma \log(T)\right]^{-1}\cdot\sup_{z\in \mathbb{Z}^d}E\left(  \sum_{s=i-m_T-r_j}^{i-m_T-r_{j-1}}   g\left(W_s,z \right)\right)+ \sup_{w\in \mathbb{Z}^d}  \sum_{s=i-m_T-r_j}^{i-m_T-r_{j-1}} P\left(W_s=w \right)   \\
		\le &C''\left( \frac{s_j}{\log(T)\cdot \left( s_j-2m_T\right) }+\frac{s_j}{\left(s_j-2m_T\right)^2}\right) \le  \frac{3C''}{\log(T)}.
	\end{split}
\end{equation}
Since that the event $\left\lbrace Z[i-m_T,i-1]\cap Z[i+1,i+m_T]= \emptyset\right\rbrace $ is measurable w.r.t. $\left\lbrace Z_{k+1}-Z_k\right\rbrace_{i-m_T\le k\le i+m_T-1}$. By Lemma \ref{lemma_lawler2} and (\ref{A17}), we have 
	\begin{equation}\label{A18new}
		\begin{split}
			&E\left[  \mathbbm{1}_{Z[i-m_T,i-1]\cap Z[i+1,i+m_T]= \emptyset}\left(\frac{\sum_{l=r_{j-1}}^{r_{j}} g\left(Z_l,Z_{i+m_T} \right)}{\gamma \log(T)}+\sum_{l=r_{j-1}}^{r_{j}}\mathbbm{1}_{Z_l=Z_{i+m_T-1}} \right) \right]\\
			\le & \frac{3C''}{\log(T)}\cdot P\left(Z[i-m_T,i-1]\cap Z[i+1,i+m_T]= \emptyset\right)\\
			\le & \frac{3C''C_8}{\left[ \log(T)\right]^{\frac{3}{2}}}. 
		\end{split}
\end{equation}

	For $p_0-4<i\le p_0+1$, parallel to (\ref{A7}), we also have 
	\begin{equation}\label{A18}
		P\left( Z[r_{j-1},r_{j}] \cap Z\left[ r_{j}+s_j,\infty\right)  \neq\emptyset,Z\left[i-m_T,i-1 \right]\cap Z\left[i+1,i+m_T \right] =\emptyset    \right) \le \frac{C}{\log(T)}. 
	\end{equation}
	Noting that $i\le N_T$ and that $s_j\le \frac{s_{j-1}}{2}+1$, we have $p_0\le C'''\log(\log(N_T))$. Therefore, combining (\ref{A7}), (\ref{A9}), (\ref{A.18}), (\ref{A18new}) and (\ref{A18}), we have: for any $0\le i\le N_T$, 
		\begin{equation}\label{A.19}
			\begin{split}
				&E\left[ P_{X_{i}}\left( \frac{2T}{\log(T)}< \bar{H}_{R_{i}}\left(\hat{X}^{X_i}_\cdot \right)<\infty\right)\right]\\
				\le& C'''\log(\log(N_T)) \left[\frac{3C''C_8}{ \left[ \log(T)\right] ^{\frac{3}{2}}}+\frac{C'N_T}{T^3}\right]+\frac{5C}{\log(T)}.
			\end{split}
	\end{equation}

	For the second term in the RHS of (\ref{A.3}), by the reversibility of simple random walks, we can also prove the following inequality by using the same arguments: for any $0\le i\le N_T$, 
		\begin{equation}\label{A.20}
			\begin{split}
				&E\left[P_{X_{i}}\left(\frac{2T}{\log(T)}< \bar{H}_{ X[i,N_T]}\left(\hat{X}^{X_i}_\cdot \right)<\infty\right)\right]\\
				\le& C'''\log(\log(N_T)) \left[\frac{3C''C_8}{ \left[ \log(T)\right] ^{\frac{3}{2}}}+\frac{C'N_T}{T^3}\right]+\frac{5C}{\log(T)}.
			\end{split}
	\end{equation}

	By (\ref{A6}), (\ref{A..6}), (\ref{A.3}), (\ref{A.19}) and (\ref{A.20}), we have: for $d=4$ and $T\ge 2$, 
		\begin{equation}\label{A.21}
			\begin{split}
				&	E\left[\left( cap^{(T)}\left(R_{N_T}\right)-cap\left(R_{N_T}\right)\right)\cdot\mathbbm{1}_{N_T\ge 2} \right]\\
				\le &E\left[ 2N_T \left\lbrace C'''\log(\log(N_T)) \left[\frac{3C''C_8}{ \left[ \log(T)\right] ^{\frac{3}{2}}}+\frac{C'N_T}{T^3}\right]+\frac{5C}{\log(T)} \right\rbrace    \mathbbm{1}_{N_T\ge 2} \right]+3F_4(T) \\
				\le &\widetilde{C}\cdot F_4(T). 
			\end{split}
	\end{equation}
	Combine (\ref{A.21}) and Lemma \ref{lemma_cap},
		\begin{equation}
			\begin{split}
				&E\left[cap^{(T)}\left(R_{N_T}\right)\right]\\
				\le &	E\left[cap^{(T)}\left(R_{N_T}\right)\cdot\mathbbm{1}_{N_T\ge 2}\right]+ cap^{(2)}\left(\{0\}\right)\\
				=&E\left[\left( cap^{(T)}\left(R_{N_T}\right)-cap\left(R_{N_T}\right)\right)\cdot\mathbbm{1}_{N_T\ge 2} \right]+E\left[cap\left(R_{N_T}\right)\cdot\mathbbm{1}_{N_T\ge 2}\right]+ cap^{(2)}\left(\{0\}\right)\\
				\le &\widetilde{C}\cdot F_4(T)+E\left[C_5\cdot F_4(N_T)\cdot\mathbbm{1}_{N_T\ge 2}\right] + cap^{(2)}\left(\{0\}\right)\\
				\le &\hat{C}\cdot F_4(T). 
			\end{split}
	\end{equation}
	In conclusion, we now find the desired bound for $d=4$ and thus complete the proof of Proposition \ref{lemma_capT}.
\end{proof}

\subsection{Proof of the lower bound for $d\ge 4$}\label{section4.3}
With Proposition \ref{lemma_capT}, we are now ready to conclude the proof of the lower bound of Theorem \ref{theorem1} for $d\ge 4$ in this section. The Galton-Watson type approach here is, in some sense parallel to the argument in Section 3, \cite{MR3519252}.

We first introduce the following decomposition of clusters in $\mathcal{FI}^{u,T}$. For any subset $K\subset \mathbb{Z}^d$, we denote by $\Gamma^{u,T}_K$ the union of all clusters in $\mathcal{FI}^{u,T}$ which intersect $K$. Then we define the $k$-th layer of $\Gamma^{u,T}_K$ inductively as follows ($k\ge 1$):
\begin{enumerate}	
	\item Denote by $\Pi_1^{u,T}(K)$ the union of paths in $\mathcal{FI}^{u,T}$ intersecting $K$;

	\item For any $k\ge 2$, assume that we already have $\Pi_1^{u,T}(K),...,\Pi_{k-1}^{u,T}(K)$, then let $\Pi_k^{u,T}(K)$ be the union of paths in $\mathcal{FI}^{u,T}$ intersecting $ V\left( \Pi_{k-1}^{u,T}(K) \right) $ but not intersecting the set $K\cup \bigcup_{1\le j\le k-2} V\left(\Pi_j^{u,T}(K)\right)$.

\end{enumerate}
By definition, we immediately have the following two observations: (1) $\Gamma^{u,T}_K=\bigcup_{k=1}^{\infty}\Pi_{k}^{u,T}$; (2) if there exists some $k_0\ge 1$ such that $\Pi_{k_0}^{u,T}=\emptyset$, then for all $k\ge k_0+1$, $\Pi_{k_0}^{u,T}=\emptyset$.

Let $K=\{0\}\subset \mathbb{Z}^d$ ($d\ge 4$) and suppose that $T\ge 2$ and $v>0$. Note that when the event $\left\lbrace 0\xleftrightarrow[]{\mathcal{FI}^{v\cdot F_d(T)^{-1},T}} \infty \right\rbrace$ occurs, we have: for each $i\ge 1$,  $\Pi_i:=\Pi_i^{v\cdot F_d(T)^{-1},T}\left(K\right)\neq \emptyset$. Now we are going to show the following inequality by induction: there exists $C(d)>0$ such that for all $T\ge 2$, $v>0$ and $k\ge 1$,
\begin{equation}\label{424}
	E\left[cap^{(T)}\left( V\left( \Pi_k\right) \right) \right]\le C(d)\cdot \left(2dC_6v\right)^{k}, 
\end{equation}
where $C_6$ is the constant in Proposition \ref{lemma_capT}.

When $k=1$, note that the number of paths in $\mathcal{FI}^{v\cdot F_d(T)^{-1},T}$ intersecting $K$ is 
\begin{equation}
	N_1\sim Pois\left(2d\cdot cap^{(T)}\left(\{0\}\right)\cdot v\cdot F_d(T)^{-1} \right).  
\end{equation}
We denote these $N_1$ paths by $\eta_1^1,...,\eta^1_{N_1}$, then by Lemma \ref{lemma_capT}, 
\begin{equation}
	\begin{split}
			E\left[cap^{(T)}\left( V\left( \Pi_1\right) \right)\right]\le& E\left[\sum_{j=1}^{N_1}cap^{(T)}\left(V\left( \eta_j^1\right) \right)  \right]\\
			\le &E\left[N_1\right]\cdot C_6F_d(T) = cap^{(T)}\left(\{0\}\right)\cdot \left(2dC_6v \right).
	\end{split}
\end{equation}
Since for $T\ge 2$, $cap^{(T)}\left(\{0\}\right)\le C(d):=cap^{(2)}\left(\{0\}\right)$, we have that (\ref{424}) holds when $k=1$.

When $k\ge 2$, assume that (\ref{424}) holds for $k-1$. If $\Pi_1,...,\Pi_{k-1}$ is given, then by definition we have $\Pi_k$ is the union of paths in $\mathcal{FI}^{v\vdot F_d(T)^{-1},T}$ that intersect $V\left( \Pi_{k-1}\right) $ but does not intersect $K\cup \bigcup_{1\le j\le k-2}V\left( \Pi_{j}\right)$. Hence, we have 
\begin{equation}\label{4.27}
	\Pi_k\subset \hat{\Pi}_k:=\left\lbrace \eta\in \mathcal{FI}^{v\cdot F_d(T)^{-1},T}:\eta\ \text{intersects}\ V\left( \Pi_{k-1}\right) \right\rbrace.  
\end{equation}
Note that given $\Pi_{k-1}$, the number of paths in $\hat{\Pi}_k$ is a Poisson random variable  $Pois\left(2d\cdot cap^{(T)}\left( V\left( \Pi_{k-1}\right) \right)\cdot v\cdot F_d(T)^{-1} \right)$. Therefore, by Lemma \ref{lemma_capT},  (\ref{4.27}) and inductive hypothesis, we have 
\begin{equation}
	\begin{split}
		E\left[cap^{(T)}\left( V\left( \Pi_k\right) \right)\right]=& E\left\lbrace E\left[cap^{(T)}\left( V\left( \Pi_k\right) \right)\big|\Pi_j,1\le j\le k-1\right]   \right\rbrace \\
		\le & E\left\lbrace E\left[cap^{(T)}\left( V\left( \hat{\Pi}_k\right) \right)\bigg|\Pi_j,1\le j\le k-1\right]   \right\rbrace \\
		\le & E\left\lbrace 2d\cdot cap^{(T)}\left( V\left( \Pi_{k-1}\right) \right)\cdot v\cdot F_d(T)^{-1}\cdot C_6F_d(T) \right\rbrace\\
		\le & C\cdot \left(2dC_6v\right)^{k}. 
	\end{split}
\end{equation}	
In conclusion, 	(\ref{424}) holds for all $T\ge 2$, $v>0$ and $k\ge 1$. 

Take $C_1<\left(2dC_6\right)^{-1}$. Letting $v=C_1$ in (\ref{424}), by Markov inequality, we have: for any $T\ge 2$, 
\begin{equation}
	\begin{split}
		\sum_{k=1}^{\infty}P\left(\Pi_k\neq\emptyset \right)\le& \sum_{k=1}^{\infty}P\left(cap^{(T)}\left( V\left( \Pi_k\right) \right)\ge cap^{(T)}\left( \{0\}\right) \right)\\
		\le & \frac{1}{cap^{(T)}\left( \{0\}\right)}\sum_{k=1}^{\infty}E\left[cap^{(T)}\left( V\left( \Pi_k\right) \right)\right] \\
		\le & \frac{1}{cap^{(T)}\left( \{0\}\right)}\sum_{k=1}^{\infty} C\cdot \left(2dC_6C_1\right)^{k}<\infty. 
	\end{split}
\end{equation}  
Thus, by Borel-Cantelli Lemma, we have 
\begin{equation}
	P\left(0\xleftrightarrow[]{\mathcal{FI}^{C_1\cdot F_d(T)^{-1},T}} \infty\right)\le P\left( \left\lbrace \Pi_k\neq \emptyset\right\rbrace \ \text{i.o.}\right)=0,  
\end{equation}
which implies that $\mathcal{FI}^{C_1\cdot F_d\left(T\right)^{-1},T }$ does not percolate for all $T\ge 2$.   \qed

\section{Proof of the upper bound}\label{section_upperbound}

In this section, we prove the upper bound estimates in Theorem \ref{theorem1}. It is sufficient to prove that for $d\ge 3$, there exists $C_2(d)>0$ such that for all sufficiently large $T$, $\mathcal{FI}^{C_2\cdot F_d(T)^{-1},T}$ percolates. In fact, this result is moderately stronger than the previous result in Part 5 of Theorem 3, \cite{cai2021non}. To obtain this improvement, we adopt a completely different approach.

Here is a brief outline of our strategy. We explore the ``growth" of a certain FRI cluster according to a random algorithm $\mathbb{T}$, which looks for a fully covered box as a starting point. The cluster then grows step by step in a slab containing this box according to a coarse-graining procedure. As in \cite{grimmett1990supercritical}, we show in a  that this growth process dominates a supercritical Bernoulli percolation in $\mathbb{Z}^2$ and thus, produces an infinite cluster with a positive probability. We note that a very similar approach was also employed for the Wiener sausages in \cite{MR3519252}, while the criteria for ``good boxes" are substantially different.

To be precise, the random algorithm $\mathbb{T}$ is defined as follows:
\begin{enumerate}
		\item[Step 0:] Sample all the paths in $\mathcal{FI}^{C_2\cdot F_d(T)^{-1},T}$ starting from $\cup_{m=0}^{M(T)}B_{10n_T\cdot m e_1}^T$, where $M(T)$ is a function of $T$ which will be determined later, $e_1=(1,0,0,...,0)\in \mathbb{Z}^d$. 
		
		\begin{itemize}
			\item If there exists an integer $m\in \left[0,M(T)\right]$ such that all edges in $ B_{10n_T\cdot m e_1}^T$ are covered by the paths starting from $B_{10n_T\cdot m e_1}^T$, we denote by $m_0$ the lexicographically-smallest $m$ and let $x_0=m_0e_1$. Set that $\mathfrak{W}=\left\lbrace y\in \mathbb{Z}^d:y^{(1)}=m_0, \text{for all}\ 4\le i\le d,y^{(i)}=0\right\rbrace$, $\mathfrak{D}_0=\left\lbrace x_0 \right\rbrace $, $\mathfrak{E}_0=\emptyset$ and $\mathfrak{J}_0=\left\lbrace (x_0,B_{10n_T\cdot x_0}^{T}) \right\rbrace $.

			\item Otherwise, stop the algorithm.
		\end{itemize}

		\item[Step $k$:] Suppose that we already have $\mathfrak{D}_{k-1}$, $\mathfrak{E}_{k-1}$ and $\mathfrak{J}_{k-1}$ ($k\ge 1$). 
		\begin{itemize}
			\item If $\left( \partial^{out}_{\mathfrak{W}}\mathfrak{D}_{k-1}\right) \setminus \mathfrak{E}_{k-1}\neq \emptyset$ ($\partial^{out}_{\mathfrak{W}}A:=\left\lbrace x\in \mathfrak{W}\setminus A:\exists\ y\in A\ s.t.\ |x-y|_1=1 \right\rbrace $), we denote the lexicographically-smallest vertex in $\left( \partial^{out}_{\mathfrak{W}}\mathfrak{D}_{k-1}\right) \setminus \mathfrak{E}_{k-1}$ by $x_{k}$ and sample all the paths in $\mathcal{FI}^{C_2\cdot F_d(T)^{-1},T}$ starting from $B_{10n_Tx_{k}}$. By definition, there must exist at least one $(z,A_z)\in \mathfrak{J}_{k-1}$ such that $|z-x_k|_1=1$. And if such tuple is not unique, we choose $(z_{k-1},\mathcal{C}_{k-1})$ as the one such that $z_{k-1}$ is the lexicographically-smallest. 
			\begin{itemize}
				\item If there exists a sequence of paths in $\mathcal{FI}^{C_2\cdot F_d(T)^{-1},T}$ (denoted by $\{\eta_1,...,\eta_{m_{k}}\}$) starting from $B_{10n_Tx_{k}}^T$ satisfying the followings: 
					\begin{itemize}
					\item[(1)]$\text{cap}^{(T)}\left(V\left( \eta_1\cup...\cup\eta_{m_{k}} \right) \cap \widetilde{B}^T_{10n_Tx_{k}}\right) \ge c_1 n_T^{d-2}$ (recall the constant $c_1(d)$ in (\ref{cap_box}));
					
					\item[(2)]for any $1\le i\le m_{k}$, $\eta_i$ intersects $\mathcal{C}_{k-1}$,
					\end{itemize}
		   then let $\mathfrak{D}_{k}=\mathfrak{D}_{k-1}\cup \{x_{k}\}$, $\mathfrak{E}_{k}=\mathfrak{E}_{k-1}$ and $\mathfrak{J}_{k}=\mathfrak{J}_{k-1}\cup \left\lbrace \left( x_{k},V\left( \eta_1\cup...\cup\eta_{m_{k}}\right) \right) \right\rbrace $.

				\item Otherwise, let $\mathfrak{D}_{k}=\mathfrak{D}_{k-1}$, $\mathfrak{E}_{k}=\mathfrak{E}_{k-1}\cup \{x_{k}\}$ and $\mathfrak{J}_{k}=\mathfrak{J}_{k-1}$.  
			\end{itemize}

			\item If $\left( \partial^{out}_{\mathfrak{W}}\mathfrak{D}_{k-1}\right) \setminus \mathfrak{E}_{k-1}= \emptyset$, stop the algorithm. 
		\end{itemize}
		
\end{enumerate}

See Figure \ref{figure3} for an illustration of the random algorithm $\mathbb{T}$.

\begin{figure}[h]
	\centering
	\includegraphics[width=0.6\textwidth]{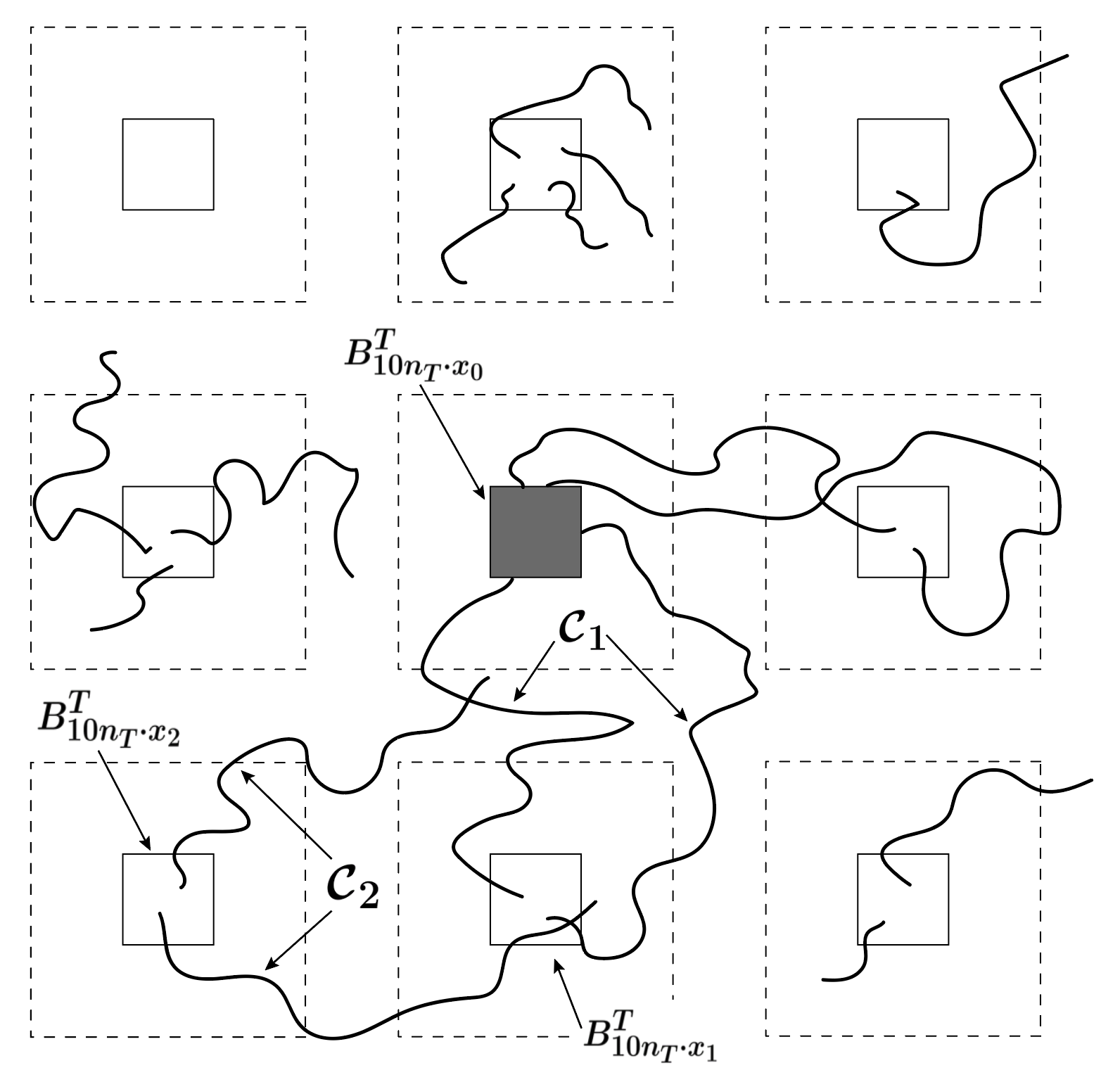}
	\caption{An	illustration of the random algorithm $\mathbb{T}$.}
	\label{figure3}
\end{figure}

	We claim that if the algorithm $\mathbb{T}$ has infinite steps, then $\mathcal{FI}^{C_2\cdot F_d(T)^{-1},T}$ contains an infinite cluster. To see this, suppose that the sequence of tuples added to  $\mathfrak{J}_\cdot$ is $\left\lbrace (z_i,A_i) \right\rbrace_{i=0}^{\infty}$. Then it is sufficient to confirm that all vertices in  $\bigcup_{i=0}^{\infty}A_i$ are connected to each other in $\mathcal{FI}^{C_2\cdot F_d(T)^{-1},T}$. We now prove this by induction: firstly, by the definition of Step $0$, it is immediate that all vertices in  $A_0=B_{10n_Tx_0}^{T}$ are connected; meanwhile, assuming that $\bigcup_{i=0}^{k}A_i$ is already connected, then by definition, all the vertices in $A_{k+1}$ are connected to $\bigcup_{i=0}^{k}A_i$ and thus $\bigcup_{i=0}^{k+1}A_i$ forms a cluster in $\mathcal{FI}^{C_2\cdot F_d(T)^{-1},T}$.

	Therefore, to prove $\mathcal{FI}^{C_2\cdot F_d(T)^{-1},T}$ percolates, it is sufficient to show that when $C_2$ is large enough, the growth of $\mathfrak{D}_k$ in $\mathbb{T}$ stochastically dominates a supercritical Bernoulli site percolation. I.e., the conditional probability (condition on the configuration of trajectories already sampled) of success (means that a new tuple is added to $\mathfrak{J}_\cdot$) of each $k$-th step ($k\ge 0$) is always greater than $p^+:=\frac{p_c(2)+1}{2}$, where $p_c(2)$ is the critical parameter of site percolation on $\mathbb{Z}^2$.

	Firstly, for Step 0, note that there exists $p_0(T)>0$ such that for all $x\in B_0^T$, $P_x^{(T)}\left(X_\cdot\ \text{covers all the edges in}\ B_0^{T}\right)\ge p_0(T)$. Hence, for each $0\le m\le M(T)$, the number of paths in $\mathcal{FI}^{C_2\cdot F_d(T)^{-1},T}$ that start from $B_{10n_T\cdot me_1}^T$ and cover all the edges in $B_{10n_T\cdot me_1}^T$ dominates $Pois(\hat{p}_0(T,C_2)):=Pois\left(\frac{2du}{T+1}\cdot C_2F_d(T)^{-1}\cdot n_T^d\cdot p_0(T)\right)$. Therefore, the probability of success of Step $0$ is greater than $1-e^{-\hat{p}_0(T)\cdot M(T,C_2)}$. Taking $M(T)=\frac{-\log(1-p^+)}{\hat{p}_0(T)}$, then for all large enough $T$, Step $0$ succeeds with probability at least $p^+$.

	For each $k$-th step ($k\ge 1$), we need the following lemma:
	\begin{lemma}\label{lemma_Pcap}
		Suppose that $A\subset \widetilde{B}_0^{T}$ satisfying $cap(A)\ge c_1n_T^{d-2}$. We denote by $\mathcal{G}^{\hat{u},T}$ the collection of paths in $\mathcal{FI}^{\hat{u}\cdot F_d(T)^{-1},T}$ starting from $B_{10n^{T}e_1}^{T}$ with length $\ge 2T$. Let $\mathcal{G}_A^{\hat{u},T}$ be the collection of paths in $\mathcal{G}^{\hat{u},T}$ intersecting $A$. Then there exist $C_9(d),c_4(d)>0$ such that for all $\hat{u}>0$ and sufficiently large $T$, 
		\begin{equation}\label{5.1}
			P\left[ cap\left(\widetilde{B}_{10n^{T}e_1}^{T}\cap V\left(\bigcup_{\eta\in \mathcal{G}_A^{\hat{u},T}} \eta \right)  \right)\ge c_1n_T^{d-2} \right]\ge 1-C_9e^{-c_4\hat{u}}.  
		\end{equation} 
	\end{lemma}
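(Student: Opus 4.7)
The plan is a three-step strategy combining Poisson thinning, Poisson-superposition amplification of FRI, and a Paley--Zygmund second-moment estimate. Throughout, let $K := n_T^{d-2}/F_d(T)$; heuristically this is the number of SRW trajectories of length $\sim T$ needed to accumulate capacity $\sim n_T^{d-2}$, since by Lemma \ref{lemma_cap} each such range carries expected capacity $\sim F_d(T)$.

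\emph{Counting.} By Definition \ref{def_FRI2}, the paths of $\mathcal{FI}^{\hat{u} F_d(T)^{-1},T}$ starting in $B_{10n_T e_1}^T$ are, vertex by vertex, independent Poisson families of law $P_x^{(T)}$ at rate $\frac{2d\hat{u}}{(T+1)F_d(T)}$. Thinning through the events $\{N_T\ge 2T\}$ (probability $\ge e^{-3}$ for large $T$) and $\{\bar{H}_A<\infty\}$ shows that $|\mathcal{G}_A^{\hat{u},T}|$ stochastically dominates a Poisson variable of parameter $\ge c\hat{u} K$: indeed, $d(B_{10n_T e_1}^T,\widetilde{B}_0^T)=\Theta(n_T)$ and $cap(A)\ge c_1 n_T^{d-2}$ give $P_x(\bar{H}_A<\infty)\ge c$ via the standard equilibrium-measure formula $P_x(\bar{H}_A<\infty)=\sum_y G(x,y)e_A(y)$, and restricting to $\bar{H}_A\le T$ costs only a constant factor by diffusive scaling.

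\emph{Amplification.} Using the Poisson decomposition of $\mathcal{FI}^{\hat{u} F_d(T)^{-1},T}$ into $\lfloor\hat{u}\rfloor$ independent copies of $\mathcal{FI}^{F_d(T)^{-1},T}$, and writing $U:=\widetilde{B}_{10n_T e_1}^T\cap V(\bigcup_{\eta\in \mathcal{G}_A^{1,T}}\eta)$ for the single-copy target set, the monotonicity of $cap$ in the vertex set reduces the problem to showing $P(cap(U)\ge c_1 n_T^{d-2})\ge p$ for some $T$-independent $p>0$; the desired bound then follows from $1-(1-p)^{\lfloor\hat{u}\rfloor}\le C_9 e^{-c_4\hat{u}}$.

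\emph{Single-copy estimate and main obstacle.} For $\hat{u}=1$, each path (before exiting $\widetilde{B}_{10n_T e_1}^T$) runs for $\Omega(T)$ steps with positive probability, since $B_{10n_T e_1}^T$ sits at distance $\ge n_T$ inside $\widetilde{B}_{10n_T e_1}^T$; by Lemma \ref{lemma_cap} such a segment has expected capacity $\Omega(F_d(T))$. Summed over the $\sim K$ paths this should give $E[cap(U)]\gtrsim n_T^{d-2}$. I would then control $E[cap(U)^2]\le C\,E[cap(U)]^2$ via pairwise intersection estimates and conclude by Paley--Zygmund. The main obstacle is precisely this first-/second-moment comparison: $cap$ is only subadditive, and the defects in additivity must be controlled by pairwise intersection probabilities of independent killed SRW ranges. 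This is most delicate at $d=4$, where the logarithmic corrections from Lemmas \ref{lemma_lawler}--\ref{lemma_lawler2} are exactly what fix the $F_4(T)=T/\log T$ scaling; the machinery assembled in the proof of Proposition \ref{lemma_capT} should apply.
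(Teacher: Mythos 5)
Your skeleton --- Poisson superposition of $\mathcal{FI}^{\hat u F_d(T)^{-1},T}$ into $O(\hat u)$ independent copies, reduction to a $T$-independent single-copy success probability, and Paley--Zygmund for that single copy --- is exactly the paper's strategy, but the single-copy estimate as you sketch it has a real flaw. The first-moment bound $E[cap(U)]\gtrsim n_T^{d-2}$ cannot be obtained by ``summing'' the expected capacities of the $\sim K$ individual segments: capacity is \emph{sub}additive, so that sum is an upper bound for $cap(U)$, not a lower bound. The paper instead invokes a R\'ath--Sapozhnikov-type input (Lemma \ref{lemma_capphi}, adapted from \cite{rath2010connectivity,rath2011transience}), namely $E[cap(\bar\Phi(N,n))]\ge c_5\min\{NnF_d(n),\,n^{d-2}\}$ for a union of $N$ \emph{independent, unconditioned} SRW ranges; this is where the pairwise Green's-function interactions must be controlled. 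You flag the additivity defect but attach it to the wrong moment: the second moment is handled by plain subadditivity, $cap(U)\le\sum_i cap(\eta_i[0,\lfloor T\rfloor])$, together with the single-range bound $E[cap(X[0,n])^2]\le C F_d(n)^2$ (Lemma \ref{lemma_cap2}; for $d=4$ this requires the concentration result of \cite{chang2017two}), with no intersection estimates needed.

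The second, subtler gap is that $U$ is built from paths \emph{conditioned to hit $A$}, and this conditioning biases the trajectory law inside $\widetilde B^T_{10n_Te_1}$ (the path is tilted toward $A$), so no unconditioned-SRW capacity lemma applies directly to $\mathcal{G}_A^{\hat u,T}$. The paper's Lemma \ref{lemma_Ecap_hatu} resolves this by showing that the sub-collection $\{H_A(\eta_i)\le 2T\}$ stochastically contains an \emph{independent} Bernoulli($c'$) thinning of all the paths: since $\min_z P_z(H_A\le T)\ge c'$ uniformly over $z\in\widetilde B^T_{10n_Te_1}$ (your equilibrium-measure computation gives exactly this), the event ``hits $A$'' can be realized using only the increments \emph{after} the truncation time $H_i$, which are independent of the initial segment $\eta_i[0,H_i]$; the retained initial segments then form a Poisson family of unbiased killed-SRW segments at intensity reduced by the constant factor $c'$, to which Lemma \ref{lemma_capphi} applies. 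Without some such decoupling your first-moment computation is not justified. (A minor slip besides: in the amplification you need $(1-p)^{\lfloor\hat u\rfloor}\le C_9e^{-c_4\hat u}$, i.e.\ the bound on the failure probability, not on $1-(1-p)^{\lfloor\hat u\rfloor}$.)
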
 
	
	Once we have Lemma \ref{lemma_Pcap}, the existence of $C_2$ is immediate. To see this, take $A=\mathcal{C}_{k-1}$ in Lemma \ref{lemma_Pcap}. Note that when the event in (\ref{5.1}) happens, $\mathcal{G}_A^{\hat{u},T}$ meets all requirements for the Condition (1) and (2) in Algorithm $\mathbb{T}$ and thus the $k$-th step succeeds. Hence, it is sufficient to take $C_2>\frac{\log(C_9)-\log(1-p^+)}{c_4}$ and then the probability of success of each $k$-th step is always greater than $p^+$, for all large enough $T$. In conclusion, given Lemma \ref{lemma_Pcap}, we get the existence of $C_2$ and complete the proof of Theorem \ref{theorem1}.  \qed
	
	The proof of Lemma \ref{lemma_Pcap} is more standard and is put in the next subsection for completeness.

	\subsection{Proof of Lemma \ref{lemma_Pcap}}
	
	
	We hereby state a lemma parallel to Lemma 4 in \cite{rath2011transience}. In fact, a part of this lemma (in the case $d\ge 5$) is given by Lemma 4 in \cite{rath2010connectivity} and for $d=3,4$, this lemma can also be proved in the same way. Therefore, we just omit its proof here. 
	\begin{lemma}\label{lemma_capphi}
		For $d\ge 3$, let $X^{i}_\cdot $ be a sequence of independent simple random walks on $\mathbb{Z}^d$ with $X^{i}_0=x^i$. Define that  $\bar{\Phi}(N,n):=\bigcup_{i=1}^{N}\left\lbrace X^i_0,...,X^i_{\lfloor \frac{n^2}{2}\rfloor\land H_i} \right\rbrace $, where $H_i:=\min\left\lbrace {j:|X^i_j-x^i|\ge n}\right\rbrace$. Then there exists $c_5(d)>0$ such that for any integers $N\ge 1$ and $n\ge 2$, 
		\begin{equation}
			E\left[ \text{cap}\left(\bar{\Phi}(N,n)\right) \right] \ge  c_5\cdot \min\left\lbrace Nn\cdot F_d(n),n^{d-2} \right\rbrace.
		\end{equation}
	\end{lemma}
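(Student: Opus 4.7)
The plan is to apply a standard second-moment argument via the variational characterization of capacity, extending the proof of Lemma 4 in \cite{rath2010connectivity} (which treats the case $d\ge 5$) to $d=3,4$ by essentially the same computation.

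For each $i=1,\ldots,N$, set $\tau_i := \lfloor n^2/2\rfloor \wedge H_i$, $R_i := \{X^i_0,\ldots,X^i_{\tau_i}\}$, and let $\ell(x) := \sum_{i=1}^N \sum_{j=0}^{\tau_i}\mathbf{1}_{X^i_j = x}$ be the combined occupation measure, which is supported on $\bar{\Phi}(N,n)$. The classical variational inequality $\mathrm{cap}(A) \ge \mu(A)^2/\mathcal{E}(\mu)$, applied with $\mu = \ell$, yields the deterministic bound
$$\mathrm{cap}\bigl(\bar{\Phi}(N,n)\bigr) \;\ge\; \frac{T^2}{S},\qquad T := \sum_x \ell(x),\qquad S := \sum_{x,y}g(x,y)\ell(x)\ell(y),$$
and Cauchy--Schwarz in expectation gives $E[\mathrm{cap}(\bar{\Phi})]\ge (E[T])^2/E[S]$.

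The numerator is easy: by the invariance principle $P(H_i > \lfloor n^2/2\rfloor) \ge c(d) > 0$, so $E[\tau_i + 1] \gtrsim n^2$ and hence $E[T] \gtrsim Nn^2$. For the denominator, I would split $E[S] = D + O$ into diagonal ($i=i'$) and off-diagonal ($i\ne i'$) parts. The diagonal is bounded by $N\cdot E[\sum_{j,k\le \tau_i}g(X^i_j,X^i_k)]$, and a direct Fubini computation using the local-CLT estimate $E[g(0,X_s)]\lesssim s^{1-d/2}$ yields $D\lesssim N\cdot D_d(n)$, where $D_d(n)$ is of order $n^3$, $n^2\log n$, $n^2$ for $d=3,4,\ge 5$ respectively. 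The off-diagonal term uses independence of $X^i$ and $X^{i'}$ to reduce $E[g(X^i_j,X^{i'}_k)]$ to $E[g(0, Z_{j+k})]$ for a single SRW $Z$ started at $x^i-x^{i'}$; summing over $j,k\le n^2/2$ gives $O \lesssim N^2 Q_d(n)$ for a suitable $Q_d(n)$.

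Combining and taking the minimum,
$$E[\mathrm{cap}(\bar{\Phi})] \;\gtrsim\; \frac{(Nn^2)^2}{ND_d(n)+N^2 Q_d(n)}\;\gtrsim\; \min\!\left\{\frac{Nn^4}{D_d(n)},\frac{n^4}{Q_d(n)}\right\},$$
and a case-by-case verification yields $n^4/D_d(n) \asymp nF_d(n)$ (the diagonal-dominated regime, corresponding to small $N$) and $n^4/Q_d(n) \asymp n^{d-2}$ (the off-diagonal-dominated regime, corresponding to large $N$), matching the claim. The main technical difficulty is obtaining the off-diagonal estimate $Q_d(n)$ with the correct order uniformly in the starting points $x^i$; the crucial input is the truncation $\tau_i \le H_i$, which confines each trajectory to a ball of radius $n$ around $x^i$ and thus reduces the cross Green-function sum to the worst case $x^i = x^{i'}$, where it can be evaluated directly via Green's-function scaling. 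A secondary subtlety is that for very high $d$ the sum $n^4/Q_d(n)$ is dominated by $O(1)$ contributions, in which case one closes the gap by observing that once $N$ is large enough for $\bar{\Phi}$ to fill a macroscopic fraction of a ball of radius $n$, monotonicity of capacity together with \eqref{cap_box} directly yields the lower bound $c\cdot n^{d-2}$.
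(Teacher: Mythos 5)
The paper itself does not prove this lemma (it defers to Lemma 4 of R\'ath--Sapozhnikov), so I am judging your argument on its own terms. Your overall strategy --- the variational bound $\text{cap}(A)\ge \mu(A)^2/\mathcal{E}(\mu)$ with $\mu$ the occupation measure, Cauchy--Schwarz to pass to $\big(E[T]\big)^2/E[S]$, and a diagonal/off-diagonal split of the energy --- is exactly the cited route, and your computation is correct for $d=3,4,5$: there $D_d(n)\asymp n^3, n^2\log n, n^2$ and the per-pair off-diagonal energy $\sum_{j,k\le n^2/2}(1+j+k)^{1-d/2}$ is $\asymp n^3, n^2, n$, which indeed yields $\min\{NnF_d(n),n^{d-2}\}$ (for $d=3$ note $n^4/D_3(n)=n=n^{d-2}$, not $nF_3(n)=n^{3/2}$, but the minimum is $n$ anyway, so the conclusion stands).

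The gap is at $d\ge 6$. When the starting points coincide (which is essentially the relevant situation in the paper's application, where all trajectories start in a single box of side $n_T$), the per-pair off-diagonal energy $\sum_{j,k}E\big[g(X^i_j,X^{i'}_k)\big]\asymp\sum_{s\le n^2}(1+s)^{2-d/2}$ is $\Theta(\log n)$ for $d=6$ and $\Theta(1)$ for $d\ge 7$, dominated by the terms with $j+k=O(1)$, i.e.\ by the initial clumping of the walks near their common start. Hence $E[S]\ge cN^2$ and the ratio $(Nn^2)^2/E[S]$ saturates at $Cn^4$, which is strictly below the required $n^{d-2}$ in the crossover regime $N\asymp n^{d-4}$ that the paper actually needs (e.g.\ $n^4$ versus $n^5$ for $d=7$). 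Your proposed repair for high $d$ does not close this: with $N\asymp n^{d-4}$ walks of length $n^2$ the union occupies at most $Nn^2\asymp n^{d-2}\ll n^d$ sites, so $\bar\Phi(N,n)$ never fills a macroscopic fraction of $B(x^1,n)$ and contains no ball of radius comparable to $n$; monotonicity together with \eqref{cap_box} gives nothing, and the isoperimetric bound $\text{cap}(A)\gtrsim|A|^{(d-2)/d}$ only yields $n^{(d-2)^2/d}<n^{d-2}$. The standard fix is instead to build the test measure only from the late portions of the trajectories, say times $j\in[\varepsilon n^2,\tau_i]$: monotonicity of capacity in the set still applies, the numerator remains $\gtrsim Nn^2$, and since $E\big[g(X^i_j,X^{i'}_k)\big]\lesssim (j+k)^{1-d/2}\lesssim n^{2-d}$ uniformly in the starting points once $j,k\ge\varepsilon n^2$, the per-pair off-diagonal energy drops to $O(n^{6-d})$ (and $O(1)$ at $d=6$), which restores $\big(Nn^2\big)^2/\big(Nn^2+N^2n^{6-d}\big)\ge\tfrac12\min\{Nn^2,n^{d-2}\}$ for all $d\ge5$. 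With that modification your argument goes through in all dimensions.
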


	Based on Lemma \ref{lemma_capphi}, we have the following estimate for the expectation of capacity of $\mathcal{G}_A^{\hat{u},T}$. 
	\begin{lemma}\label{lemma_Ecap_hatu}
		Suppose that $A\subset \widetilde{B}_0^{T}$ satisfying $cap(A)\ge c_1n_T^{d-2}$. We enumerate all the paths in $\mathcal{G}^{\hat{u},T}$ as $\eta_i$, $1\le i\le m^{\hat{u},T}$. For any $1\le i\le m^{\hat{u},T}$, let $H_i:=H_{\partial\widetilde{B}_{10n_Te_1}^T}\left(\eta_i\right)\land \lfloor T\rfloor$. Then there exists $c_6(d)>0$ such that for all $\hat{u}>0$ and sufficiently large $T$, 
		\begin{equation}
			E\left[cap\left( \bigcup_{1\le i\le m^{\hat{u},T}: H_A(\eta_i)\le 2T}\left\lbrace \eta_i(0),...,\eta_i(H_i) \right\rbrace \right)\right]\ge c_6\hat{u}\cdot  n_T^{d-2}.
		\end{equation}
	\end{lemma}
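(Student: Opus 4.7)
The plan is to reduce the estimate to Lemma~\ref{lemma_capphi} after decoupling the event $\{H_A(\eta_i)\le 2T\}$ from the initial portion of each path. I will first compute the Poisson intensity of $\mathcal G^{\hat u,T}$: by the definition of FRI, for each $x\in B_{10n_Te_1}^T$ the paths starting at $x$ form a Poisson process of rate $\frac{2d\hat u}{T+1}F_d(T)^{-1}$, and thinning to those with length $\ge 2T$ multiplies the rate by $P(N_T\ge 2T)=(T/(T+1))^{2T}\to e^{-2}$. The total mean is thus $\mu:=E|\mathcal G^{\hat u,T}|\asymp \hat u\,n_T^{d-2}/F_d(T)$.

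Next I decouple the filter $\{H_A(\eta_i)\le 2T\}$ into an independent Bernoulli thinning. Since $A\subset\widetilde B_0^T$ is at $L^\infty$-distance of order $n_T$ from $\widetilde B_{10n_Te_1}^T$, the initial portion $\{\eta_i(0),\dots,\eta_i(H_i)\}\subset\widetilde B_{10n_Te_1}^T$ is disjoint from $A$, so by the strong Markov property
\[
P\bigl(H_A(\eta_i)\le 2T\mid\text{initial portion}\bigr)\;\ge\;P_{\eta_i(H_i)}(H_A\le T).
\]
The standard asymptotic $P_y(H_A<\infty)\asymp cap(A)/|y-A|^{d-2}$ combined with $|y-A|\asymp n_T$ and the hypothesis $cap(A)\ge c_1n_T^{d-2}$ gives $P_y(H_A<\infty)\ge c(d,c_1)$; routine SRW hitting-time estimates extend this to a uniform bound $P_y(H_A\le T)\ge\alpha$, with $\alpha=\alpha(d,c_1)>0$. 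Coupling the conditional Bernoulli filter to an independent Bernoulli$(\alpha)$ via a common uniform random variable, and invoking monotonicity of $cap$ under set inclusion, I obtain
\[
E\Bigl[cap\Bigl(\!\!\bigcup_{i:H_A(\eta_i)\le 2T}\!\!\{\eta_i(0),\dots,\eta_i(H_i)\}\Bigr)\Bigr]\;\ge\;E\Bigl[cap\Bigl(\bigcup_{i\in\mathcal S}\{\eta_i(0),\dots,\eta_i(H_i)\}\Bigr)\Bigr],
\]
with $\mathcal S$ an $\alpha$-independent thinning of $\mathcal G^{\hat u,T}$, i.e.\ a Poisson collection of i.i.d.\ simple random walks with starting points in $B_{10n_Te_1}^T$ and total mean $\alpha\mu$.

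To conclude I apply Lemma~\ref{lemma_capphi}. Since the $L^\infty$-ball of radius $n_T$ around any $x\in B_{10n_Te_1}^T$ is contained in $\widetilde B_{10n_Te_1}^T$, and $\lfloor n_T^2/2\rfloor\le\lfloor T\rfloor$, each initial portion of a path in $\mathcal S$ contains the corresponding segment forming $\bar\Phi$ in Lemma~\ref{lemma_capphi}. By monotonicity of $cap$,
\[
E[cap]\;\ge\;c_5\,E\bigl[\min\{|\mathcal S|\cdot n_TF_d(n_T),\;n_T^{d-2}\}\bigr].
\]
A direct dimension-by-dimension check shows $n_TF_d(n_T)/F_d(T)$ is bounded below by a positive constant (equal to a constant for $d\ge 4$, and to $n_T^{1/2}\ge 1$ for $d=3$), hence $\alpha\mu\cdot n_TF_d(n_T)\gtrsim\hat u\,n_T^{d-2}$. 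Converting the Poisson expectation of the minimum into this intensity lower bound in both the linear and saturated regimes yields the claimed $E[cap]\ge c_6\hat u\,n_T^{d-2}$.

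The main technical obstacle is to verify the uniform lower bound $P_y(H_A\le T)\ge\alpha$ for $y$ on the face of $\widetilde B_{10n_Te_1}^T$ opposite to $A$ (where $|y-A|$ is maximal): one needs the quantitative statement ``if the walk from $y$ hits $A$ at all, it does so within $cn_T^2\le T$ steps with constant probability'', which should follow from Green-function asymptotics together with the Markov property at a suitable time cutoff.
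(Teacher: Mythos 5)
Your proposal follows essentially the same route as the paper's proof: the paper likewise establishes the uniform bound $\min_{z\in \widetilde{B}_{10n_Te_1}^{T}}P_z\left(H_A\le T\right)\ge c'$ (imported by citation from an earlier work), couples the event that the post-$H_i$ segment hits $A$ within $T$ steps to an independent Bernoulli$(c')$ thinning via an auxiliary uniform variable, invokes the Poisson thinning property, and then applies Lemma \ref{lemma_capphi} together with $2n_TF_d(n_T)\ge F_d(T)$ and a Poisson large-deviation bound. The step you flag as the main technical obstacle is precisely the ingredient the paper handles by citation, and your sketch of it is the standard second-moment/Green-function argument, so there is no substantive gap.
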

	\begin{proof}
		Similar to Lemma 3.1 in \cite{cai2020chemical}, we have: there exist $c(d),c'(d)>0$ such that for all large enough $T$, 
		\begin{equation}\label{5.6min}
			\min_{z\in \widetilde{B}_{10n_Te_1}^T} P_z\left(H_A\le T \right)\ge \frac{c\cdot cap(A)}{T^{\frac{d-2}{2}}} \ge c'.
		\end{equation} 
	
	For any $1\le i\le m^{\hat{u},T}$, we consider the following event:
		\begin{equation}
			D_i:=\left\lbrace \left\lbrace \eta_i(H_i),\eta_i(H_i+1),...,\eta_i(H_i+\lfloor T\rfloor ) \right\rbrace\cap A= \emptyset \right\rbrace.
		\end{equation}
Let $\left\lbrace U_i\right\rbrace_{i\ge 1} \overset{i.i.d.}{\sim} U[0,1]$, which is independent to the FRI $\mathcal{FI}$. Then we define a sequence of $0$-$1$ random variables $\left\lbrace V_i \right\rbrace_{i\ge 1}$ as follows: For any $1\le i\le m^{\hat{u},T}$, 
	\begin{equation}
		V_i:=\left\{
		\begin{array}{rcl}
		1,& \ \  &\text{if the events}\ D_i\ \text{and}\ \left\lbrace U_i\le \frac{c'}{P\left[ D_i\big|\eta_i\left(H_i\right) \right]  }\right\rbrace \text{both occur};      \\
		0,& \ \  &\text{otherwise}.  	    
		\end{array}	\right.
	\end{equation}
For any $i>m^{\hat{u},T}$, we set that $V_i:=\mathbbm{1}_{U_i\le c'}$. 

We claim that for each $1\le i\le m^{\hat{u},T}$, $V_i$ is independent to $\left\lbrace \eta_i(0),...,\eta_i(H_i) \right\rbrace$. Note that $V_i$ is measurable w.r.t. $\sigma\left(\eta_i(H_i),\left\lbrace \eta_i(j)-\eta_i(j-1) \right\rbrace_{H_i+1\le j\le H_i+\lfloor T\rfloor},U_i \right)$. Meanwhile, for any $x\in \widetilde{B}_{10n_Te_1}^T$, 
\begin{equation}
	\begin{split}
			P\left[V_i=1 \big| \eta_i(H_i)=x\right]
			=P\left[D_i\big| \eta_i(H_i)=x \right]\cdot P\left[U_i\le \frac{c'}{P\left[ D_i\big|\eta_i\left(H_i\right)=x \right]  }\right]=c'.
	\end{split}
\end{equation}
Combining these two observations, we know that $V_i$ is a $0$-$1$ random variable satisfying $P\left[V_i=1 \right]=c'$ and is independent to $\left\lbrace \eta_i(0),...,\eta_i(H_i) \right\rbrace$.

We denote by $E^U$ the expectation under the law of $\left\lbrace U_i\right\rbrace_{i\ge 1}$.  Noting that for any $1\le i\le m^{\hat{u},T}$, $\left\lbrace V_i=1 \right\rbrace\subset D_i\subset \left\lbrace H_A(\eta_i)\le 2T \right\rbrace$, we have 
\begin{equation}\label{new_72}
	\begin{split}
			&E\left[cap\left( \bigcup_{1\le i\le m^{\hat{u},T}: H_A(\eta_i)\le 2T}\left\lbrace \eta_i(0),...,\eta_i(H_i) \right\rbrace \right)\right]\\
			=&\left( E^U\times E\right) \left[cap\left( \bigcup_{1\le i\le m^{\hat{u},T}: H_A(\eta_i)\le 2T}\left\lbrace \eta_i(0),...,\eta_i(H_i) \right\rbrace \right)\right]\\
		\ge &\left( E^U\times E\right) \left[cap\left( \bigcup_{1\le i\le m^{\hat{u},T}: V_i=1}\left\lbrace \eta_i(0),...,\eta_i(H_i) \right\rbrace \right)\right].
	\end{split}
\end{equation}
By the law of total expectation, we have 
\begin{equation}\label{new_73}
	\begin{split}
		&\left( E^U\times E\right) \left[cap\left( \bigcup_{1\le i\le m^{\hat{u},T}: V_i=1}\left\lbrace \eta_i(0),...,\eta_i(H_i) \right\rbrace \right)\right]\\
		=&\left( E^U\times E\right)\left\lbrace E \left[cap\left( \bigcup_{1\le i\le m^{\hat{u},T}: V_i=1}\left\lbrace \eta_i(0),...,\eta_i(H_i) \right\rbrace \right)\Bigg|\left\lbrace V_i\right\rbrace_{i\ge 1}   \right]\right\rbrace. 
	\end{split}
\end{equation}
Let $E'$ be the expectation under an i.i.d. copy of $\mathcal{FI}$ (denoted by $\mathcal{FI}'$, which is also independent to $\left\lbrace U_i\right\rbrace_{i\ge 1}$). For the conditional expectation in the RHS of (\ref{new_73}), since that $\left\lbrace \left\lbrace \eta_i(0),...,\eta_i(H_i) \right\rbrace \right\rbrace_{1\le i\le m^{\hat{u},T}}$ is independent to $ \left\lbrace V_i \right\rbrace_{i\ge 1}$, we have 
\begin{equation}\label{new_74}
	\begin{split}
		&E \left[cap\left( \bigcup_{1\le i\le m^{\hat{u},T}: V_i=1}\left\lbrace \eta_i(0),...,\eta_i(H_i) \right\rbrace \right)\Bigg|\left\lbrace V_i\right\rbrace_{i\ge 1}   \right]\\
		=&E' \left[cap\left( \bigcup_{1\le i\le m^{\hat{u},T}: V_i=1}\left\lbrace \eta_i(0),...,\eta_i(H_i) \right\rbrace \right)\Bigg|\left\lbrace V_i\right\rbrace_{i\ge 1}   \right]. 
	\end{split}	
\end{equation}
Note that when taking the expectation of the RHS of (\ref{new_74}), we are actually deleting each path in $\mathcal{G}_A^{\hat{u},T}\left( \mathcal{FI}'\right)$ independently with probability $1-c'$. Therefore, by the thinning property of Poisson point process, it is equivalent to multiply the intensity measure of $\mathcal{FI}'$ by $c'$. To be precise, we have 
\begin{equation}\label{new_75}
	\begin{split}
	&\left( E^U\times E\right)\left\lbrace 	E' \left[cap\left( \bigcup_{1\le i\le m^{\hat{u},T}: V_i=1}\left\lbrace \eta_i(0),...,\eta_i(H_i) \right\rbrace \right)\Bigg|\left\lbrace V_i\right\rbrace_{i\ge 1}   \right]\right\rbrace\\
	=&E'\left[cap\left( \bigcup_{1\le i\le m^{c'\hat{u},T}}\left\lbrace \eta_i(0),...,\eta_i(H_i) \right\rbrace \right) \right]. 
	\end{split}
\end{equation}
In conclusion, combine (\ref{new_72})-(\ref{new_75}), 
\begin{equation}\label{new_76}
	\begin{split}
		&	E\left[cap\left( \bigcup_{1\le i\le m^{\hat{u},T}: H_A(\eta_i)\le 2T}\left\lbrace \eta_i(0),...,\eta_i(H_i) \right\rbrace \right)\right]\\
			\ge &E'\left[cap\left( \bigcup_{1\le i\le m^{c'\hat{u},T}}\left\lbrace \eta_i(0),...,\eta_i(H_i) \right\rbrace \right) \right].
	\end{split}
\end{equation}

		Meanwhile, by Lemma \ref{lemma_capphi} we have 
			\begin{equation}\label{5.9c_4min}
				\begin{split}
					E'\left[cap\left( \bigcup_{1\le i\le m^{c'\hat{u},T}}\left\lbrace \eta_i(0),...,\eta_i(H_i) \right\rbrace \right)\right]\ge c_5\cdot E'\left[\min \left\lbrace m^{c'\hat{u},T}n_T\cdot F_d(n_T),n_T^{d-2} \right\rbrace \right].
				\end{split}
		\end{equation}
		Note that $m^{c'\hat{u},T}\sim Pois\left(\frac{2dc'\hat{u}F_d(T)^{-1} n_T^d}{T+1} P_0^{(T)}\left(N_T\ge 2T \right)  \right)\ge Pois\left(c''(d)\hat{u}F_d(T)^{-1}n_T^{d-2}\right)$. By the large deviation bound for Poisson random variables (see (2.11) in \cite{rath2011transience} for example) and the fact that $2n_TF_d(n_T)\ge F_d(T) $ for all large enough $T$, we have 
		\begin{equation}\label{new_78}
			E'\left[\min \left\lbrace m^{c'\hat{u},T}n_T\cdot F_d(n_T),n_T^{d-2} \right\rbrace \right]\ge c'''(d)\hat{u}\cdot n_T^{d-2}. 
		\end{equation}
		Combining (\ref{new_76}), (\ref{5.9c_4min}) and (\ref{new_78}), then Lemma \ref{lemma_Ecap_hatu} follows. 
	\end{proof}

	Moreover, we also need some estimates on the second moments of capacities of simple random walk trajectories as follows:
	\begin{lemma}\label{lemma_cap2}
		Suppose that $X_\cdot$ is a simple random walk on $\mathbb{Z}^d$. Then for $d\ge 3$, there exists $C_{10}(d)>0$ such that for all sufficiently large $n$, 
		\begin{equation}\label{5.9_Ecap2}
			E\left\lbrace \left[ cap\left(X[0,n] \right) \right]^2\right\rbrace \le C_{10}\left[ F_d(n)\right]^2. 
		\end{equation}
	\end{lemma}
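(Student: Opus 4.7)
The plan is to split by dimension, as the three regimes require genuinely different arguments.

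\textbf{Case $d\ge 5$:} Here $F_d(n)=n$, and the trivial bound $\text{cap}(A)\le |A|$ gives $\text{cap}(X[0,n])\le n+1$, so $E[\text{cap}(X[0,n])^2]\le (n+1)^2$, which is the desired bound.

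\textbf{Case $d=4$:} We will set up a recursion using subadditivity of capacity and the Markov property. Let $f(n):=E[\text{cap}(X[0,n])^2]$ and $\mu_n:=E[\text{cap}(X[0,n])]$. Subadditivity yields
\begin{equation}
	\text{cap}(X[0,2n])\le \text{cap}(X[0,n])+\text{cap}(X[n,2n]).
\end{equation}
Square and take expectation. The two squared terms each equal $f(n)$ by stationarity of the increments. For the cross term, condition on $X_n$: then $X[0,n]$ and $X[n,2n]$ are conditionally independent, and by translation invariance $E[\text{cap}(X[n,2n])\mid X_n]=\mu_n$ a.s., so the cross term equals $\mu_n\cdot E[\text{cap}(X[0,n])]=\mu_n^2$. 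This yields
\begin{equation}
	f(2n)\le 2f(n)+2\mu_n^2.
\end{equation}
By Lemma \ref{lemma_cap}, $\mu_n\le C_5 F_4(n)\le C n/\log n$. Iterating the recursion for $n=2^k$, the resulting geometric-type sum $\sum_{j=0}^{k-1} 2^{k-1-j}\cdot 4^j/j^2$ is dominated by its last term $\sim 2^{2k}/k^2$, giving $f(2^k)=O(4^k/k^2)$, i.e. $f(n)=O(n^2/\log^2 n)=O(F_4(n)^2)$, and the bound for general $n$ follows by monotonicity in $n$ up to the nearest dyadic scale.

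\textbf{Case $d=3$:} The recursion above only yields $O(n\log n)$, which is off by a log factor, so a different idea is needed. Instead, we exploit monotonicity of capacity under set inclusion together with the capacity of boxes from (\ref{cap_box}). Setting $R_n:=\max_{0\le i\le n}|X_i|_\infty$, we have $X[0,n]\subset [-R_n,R_n]^d\cap \mathbb{Z}^d$, a box of side length $2R_n+1$, so
\begin{equation}
	\text{cap}(X[0,n])\le c_2(2R_n+1)^{d-2}\le C(R_n+1).
\end{equation}
Squaring, $\text{cap}(X[0,n])^2\le C'(R_n^2+1)$. Since each coordinate process is a martingale, Doob's $L^2$ maximal inequality applied coordinate-wise gives $E[R_n^2]\le \sum_{j=1}^d E[(\max_i|X_i^{(j)}|)^2]\le C''n$. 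Hence $E[\text{cap}(X[0,n])^2]=O(n)=O(F_3(n)^2)$.

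The main obstacle is the three-dimensional case: the natural subadditivity recursion is critical and produces a spurious logarithm. The workaround is to observe that in $d=3$, the capacity of the range is, up to a constant, bounded by the diameter of the range rather than by its cardinality, so a maximal-inequality argument on the underlying walk recovers the correct order.
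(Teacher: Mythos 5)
Your proposal is correct, but it proves the lemma by a genuinely different (and more self-contained) route than the paper. The paper simply cites Lemma 5 of R\'ath--Sapozhnikov for $d=3$ and $d\ge 5$, and for the delicate case $d=4$ it invokes Chang's Theorem 1.1 in \cite{chang2017two}, which asserts that $E[\mathrm{cap}(X[0,n])^2]/\big(E[\mathrm{cap}(X[0,n])]\big)^2\to 1$, and then combines this with the first-moment upper bound of Lemma \ref{lemma_cap}. You instead give elementary arguments in all three regimes: the cardinality bound $\mathrm{cap}(A)\le |A|$ for $d\ge 5$; a dyadic recursion $f(2n)\le 2f(n)+2\mu_n^2$ for $d=4$, obtained from subadditivity plus the Markov property (your cross-term computation $E[\mathrm{cap}(X[0,n])\,\mathrm{cap}(X[n,2n])]=\mu_n^2$ is correct), which upgrades the first-moment bound $\mu_n=O(n/\log n)$ to the matching second-moment bound without any appeal to Chang's concentration result; and for $d=3$ the monotonicity of capacity together with \eqref{cap_box} and Doob's $L^2$ inequality, which is essentially the diameter argument underlying the cited R\'ath--Sapozhnikov bound. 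The paper's version is shorter because it outsources the work; yours buys a fully self-contained proof whose only external input is Lemma \ref{lemma_cap}, at the cost of a slightly longer write-up. All steps check out, including the handling of non-dyadic $n$ via monotonicity of capacity under inclusion and $F_4(2n)\le 2F_4(n)$.
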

	\begin{proof}
		When $d=3$ or $d\ge 5$, the inequality (\ref{5.9_Ecap2}) is contained in the proof of Lemma 5 in \cite{rath2011transience}. 
		
		As for the case when $d=4$, by Theorem 1.1 in \cite{chang2017two}, one has 
		\begin{equation}\label{5.10_limitEcap}
			\lim\limits_{n\to \infty}	\frac{E\left\lbrace \left[ cap\left(X[0,n] \right) \right]^2\right\rbrace}{\left\lbrace E\left[ cap\left(X[0,n] \right)\right]  \right\rbrace^2 } =1.
		\end{equation}
		Combining (\ref{5.10_limitEcap}) and Lemma \ref{lemma_cap}, we have: for all large enough $n$, 
		\begin{equation}
			E\left\lbrace \left[ cap\left(X[0,n] \right) \right]^2\right\rbrace\le 2\left\lbrace E\left[ cap\left(X[0,n] \right)\right]  \right\rbrace^2\le 2\left[C_5F_4(n)\right]^2.
		\end{equation}
		In conclusion, the inequality (\ref{5.9_Ecap2}) holds for $d\ge 3$. 
	\end{proof}

	Now we are ready to prove Lemma \ref{lemma_Pcap}:
	
	\begin{proof}[Proof of Lemma \ref{lemma_Pcap}]
		It is sufficient to prove the following result: let $c(d)=2c_1\cdot c_6^{-1}$, then there exists $c'(d)>0$ such that for all large enough $T$, 
		\begin{equation}\label{5.9_Ecap}
			\begin{split}
				P\left[cap\left( \bigcup_{1\le i\le m^{c,T}: H_A(\eta_i)\le 2T}\left\lbrace \eta_i(0),...,\eta_i(H_i) \right\rbrace \right)\ge c_1n_T^{d-2}\right]\ge c'. 
			\end{split} 
		\end{equation}
		In fact, when (\ref{5.9_Ecap}) is given, we consider i.i.d. copies contained in $\mathcal{FI}^{\hat{u}\cdot F_d(T)^{-1},T}$ as follows: for $1\le j\le \lfloor c^{-1}\hat{u} \rfloor $, let $\mathcal{FI}_j:=\sum\limits_{(u_i,\eta_i)\in \mathcal{FI}^T:c(j-1)\cdot F_d(T)^{-1}<u_i\le cj\cdot F_d(T)^{-1} }\delta_{\eta_i}$. Then for each $1\le j\le \lfloor c^{-1}\hat{u} \rfloor$, by (\ref{5.9_Ecap}) we have
		\begin{equation}\label{5.10_Ecap}
			\begin{split}
				P\left[cap\left( \bigcup_{1\le i\le m^{\hat{u},T}: H_A(\eta_i)\le 2T,\eta_i\in \mathcal{FI}_j }\left\lbrace \eta_i(0),...,\eta_i(H_i) \right\rbrace \right)\ge c_1n_T^{d-2}\right]\ge c'. 
			\end{split} 
		\end{equation}
		Hence, by (\ref{5.10_Ecap}) and the independence betweem $\mathcal{FI}_j,1\le j\le \lfloor c^{-1}\hat{u} \rfloor$, we have 
		\begin{equation}
			\begin{split}
				&P\left[ cap\left(\widetilde{B}_{10n^{T}e_1}^{T}\cap V\left(\bigcup_{\eta\in \mathcal{G}_A^{\hat{u},T}} \eta \right)  \right)\ge c_1n_T^{d-2} \right]\\
				\ge &P\left[ cap\left(\bigcup_{1\le i\le m^{\hat{u},T}: H_A(\eta_i)\le 2T}\left\lbrace \eta_i(0),...,\eta_i(H_i) \right\rbrace\right)\ge c_1n_T^{d-2} \right]\\
				\ge &1- \prod_{j=1}^{\lfloor c^{-1}\hat{u} \rfloor}P\left[cap\left( \bigcup_{1\le i\le m^{\hat{u},T}: H_A(\eta_i)\le 2T,\eta_i\in \mathcal{FI}_j }\left\lbrace \eta_i(0),...,\eta_i(H_i) \right\rbrace \right)< c_1n_T^{d-2}\right]\\
				\ge &1-\left(1-c'\right)^{c^{-1}\hat{u}-1}.
			\end{split}
		\end{equation}
		In conclusion, (\ref{5.9_Ecap}) implies Lemma \ref{lemma_Pcap}. And thus we focus on the proof of (\ref{5.9_Ecap}).

		By the Paley-Zygmund inequality (i.e., for any non-negative random variable $Z$, $P\left(Z>\frac{1}{2}E[Z] \right)\ge \frac{\left( E[Z]\right) ^2}{4E\left[ Z^2\right]  } $) and Lemma \ref{lemma_Ecap_hatu},
		\begin{equation}\label{5.15}
			\begin{split}
				&P\left[cap\left( \bigcup_{1\le i\le m^{c,T}: H_A(\eta_i)\le 2T}\left\lbrace \eta_i(0),...,\eta_i(H_i) \right\rbrace \right)\ge c_1n_T^{d-2}\right]\\
				\ge &P\Bigg[cap\left( \bigcup_{1\le i\le m^{c,T}: H_A(\eta_i)\le 2T}\left\lbrace \eta_i(0),...,\eta_i(H_i) \right\rbrace \right)\\
				 &\ \ \ \ \ge \frac{1}{2}E\left\lbrace cap\left( \bigcup_{1\le i\le m^{c,T}: H_A(\eta_i)\le 2T}\left\lbrace \eta_i(0),...,\eta_i(H_i) \right\rbrace \right)\right\rbrace  \Bigg]\\   
				\ge &\frac{\left( c_6c\cdot n_T^{d-2}\right)^2}{4E\left\lbrace \left[cap\left( \bigcup\limits_{1\le i\le m^{c,T}: H_A(\eta_i)\le 2T}\left\lbrace \eta_i(0),...,\eta_i(H_i) \right\rbrace \right)\right]^2 \right\rbrace }.
			\end{split}
		\end{equation}
		Note that $m^{c,T}\sim Pois\left(\frac{2dcF_d(T)^{-1}n_T^d}{T+1} P_0^{(T)}\left(N_T\ge 2T \right)  \right)$ and $H_i\le \lfloor T \rfloor$. By the subadditivity of capacity, Lemma \ref{lemma_cap} and Lemma \ref{lemma_cap2}, for all large enough $T$, 
		\begin{equation}\label{5.16}
			\begin{split}
				&E\left\lbrace \left[cap\left( \bigcup_{1\le i\le m^{c,T}: H_A(\eta_i)\le 2T}\left\lbrace \eta_i(0),...,\eta_i(H_i) \right\rbrace \right)\right]^2 \right\rbrace\\
				\le &E\left\lbrace \left[\sum_{1\le i\le m^{c,T}}cap\left( \left\lbrace \eta_i(0),...,\eta_i(\lfloor T \rfloor ) \right\rbrace \right)\right]^2 \right\rbrace\\
				\le &E\left[m^{c,T}\right]\cdot  E\left\lbrace \left[cap\left(\left\lbrace \eta_i(0),...,\eta_i(\lfloor T \rfloor) \right\rbrace\right)  \right]^2 \right\rbrace\\
				& + E\left[\left( m^{c,T}\right)^2\right]\cdot  \left\lbrace E\left[cap\left(\left\lbrace \eta_i(0),...,\eta_i(\lfloor T \rfloor) \right\rbrace\right) \right]       \right\rbrace^2\\
				\le& C(d)F_d(T)^{-1}n_T^{d-2}\left[F_d(T)\right]^2+C'(d)F_d(T)^{-2}n_T^{2(d-2)}\left[F_d(T)\right]^2\\
				\le &C''(d)n_T^{2(d-2)}. 
			\end{split}
		\end{equation}
		Combining (\ref{5.15}) and (\ref{5.16}), then (\ref{5.9_Ecap}) follows and thus we get Lemma \ref{lemma_Pcap}. \end{proof}
	
\section*{Acknowledgment}
The authors would like to thank Drs. Xinyi Li, and Eviatar B. Procaccia for fruitful discussions. The authors gratefully thank Dr. Bal{\'a}zs R{\'a}th for pointing out to us the very important inferences \cite{MR3519252,worm2021} and for his very helpful comments.

\bibliographystyle{plain}
\bibliography{ref}
	
\end{document}